\documentclass{iopjournal}

\usepackage[utf8]{inputenc}
\usepackage[T1]{fontenc}
\usepackage[english]{babel}
\usepackage{amsmath, amssymb, amsthm, mathtools}
\usepackage{geometry}
\usepackage{algorithm}
\usepackage{algpseudocode}
\usepackage{xcolor}
\usepackage{hyperref}
\usepackage{graphicx}
\usepackage{subcaption}
\usepackage{booktabs}
\usepackage{url}
\usepackage{tikz}
\usetikzlibrary{spy}

\newcommand{\spysize}{2.5cm}
\newcommand{\spyimg}[5]{%
  \begin{tikzpicture}[
    spy using outlines={rectangle, magnification=2.5, size=\spysize, connect spies,
      every spy on node/.append style={red, very thick},
      every spy in node/.append style={red, very thick}}
  ]
    \node[anchor=south west, inner sep=0] (img)
      {\includegraphics[width=\linewidth]{#1}};
    \begin{scope}[x={(img.south east)},y={(img.north west)}]
      \spy on (#2,#3) in node at (#4,#5);
    \end{scope}
  \end{tikzpicture}%
}

\newtheorem{theorem}{Theorem}
\newtheorem{lemma}{Lemma}
\newtheorem{proposition}{Proposition}
\newtheorem{remark}{Remark}
\newtheorem{definition}{Definition}
\newtheorem{assumption}{Assumption}

\newcommand{\R}{\mathbb{R}}
\newcommand{\N}{\mathbb{N}}
\newcommand{\norm}[1]{\left\| #1 \right\|}

\newcommand{\dual}[2]{\langle #1, #2 \rangle}
\newcommand{\inner}[2]{\left \langle #1, #2 \right \rangle}
\newcommand{\prox}{\text{prox}}
\newcommand{\dom}{\text{dom}}
\newcommand{\Dsigma}{D_{\sigma}}
\newcommand{\hsigma}{h_{\sigma}}
\newcommand{\phisigma}{\phi_{\sigma}}
\newcommand{\gsigma}{g_{\sigma}}
\newcommand{\id}{\mathrm{Id}}
\DeclareMathOperator*{\argmin}{arg\,min}
\DeclareMathOperator*{\im}{Im}
\newcommand{\meta}{\frac{1}{2}}
\newcommand{\inv}[1]{\frac{1}{#1}}
\newcommand{\tk}[1]{\tilde{#1}^k}

\newcommand{\tkd}[1]{\tilde{#1}_k}

\begin{document}


\title{PnP-IPA: A Provably Convergent Plug-and-Play Inexact Proximal Algorithm for Nonconvex Imaging Problems}

\author{Cristiano Parenti$^{1,*}$\orcid{0009-0006-2780-553X}, Silvia Bonettini$^1$\orcid{0000-0003-2936-365X} and Marco Prato$^{1}$\orcid{0000-0002-7327-3347}}

\affil{$^1$Dipartimento di Scienze Fisiche, Informatiche e Matematiche, Universit\`a di Modena e Reggio Emilia, Via Campi 213/b, 41125 Modena, Italy}

\affil{$^*$Author to whom any correspondence should be addressed.}

\email{cristiano.parenti@unimore.it}

\keywords{Nonconvex optimization, inverse problems, Plug-and-play, inexact methods, line--search}

\begin{abstract}
    Plug-and-Play (PnP) methods have emerged as a highly effective paradigm for solving imaging inverse problems by replacing traditional proximity operators of regularization terms with highly expressive deep denoisers. While empirically successful, establishing rigorous convergence guarantees for PnP algorithms remains a major challenge. Existing provable approaches based on the Gradient-Step (GS) denoiser suffer from theoretical and practical limitations, such as restrictive bounds on the regularization parameter, rigid step--size rules, and the inability to handle nonconvex data-fidelity terms. In this paper, we introduce PnP-IPA (Plug-and-Play Inexact Proximal Algorithm), a novel optimization scheme that overcomes these bottlenecks. We propose a new splitting strategy that evaluates the proximal operator of the scaled implicit regularizer inexactly. To enable adaptive step--size selection without exact objective evaluations, we design a novel surrogate merit function that successfully drives an Armijo-like backtracking line--search. Relying on the Kurdyka--{\L}ojasiewicz property, we establish global convergence to a stationary point of the nonconvex objective without imposing any assumption on the regularization parameter. Extensive numerical experiments on image deblurring under both Gaussian and Cauchy noise demonstrate the practical advantages of PnP-IPA. By effectively lifting previous theoretical constraints, our method allows for optimal parameter tuning, yielding state-of-the-art restoration quality and robust convergence even in nonconvex regimes.
\end{abstract}

\section{Introduction}

The restoration of high-quality images from corrupted or incomplete measurements is a fundamental task in modern computational imaging, encompassing applications from medical reconstruction to microscopic and astronomical imaging \cite{Bertero-etal-2022}. Mathematically, the observation process is typically modeled as a linear or nonlinear forward process subject to noise:
\begin{equation*}
    y = \mathcal{N}(A(x)),
\end{equation*}
where $x \in \R^n$ is the unknown underlying image, $y \in \R^m$ represents the noisy observation as a realization of the noise model, $A : \R^n \to \R^m$ is the forward operator (e.g., a blurring kernel, subsampled Fourier transform, or Radon transform), and $\mathcal{N}$ denotes the noise model. 

Since the operator $A$ is often ill-conditioned or possesses a nontrivial null space, recovering $x$ from $y$ is an ill-posed inverse problem. To overcome this, the classical framework relies on Bayesian inference. By Bayes' theorem, the posterior probability of the image given the observation is:
\begin{equation*}
    P(x|y) = \frac{P(y|x)P(x)}{P(y)} \propto P(y|x)P(x).
\end{equation*}
The most common approach to estimate $x$ is to find the Maximum A Posteriori (MAP) estimator, which maximizes $P(x|y)$, or equivalently, minimizes its negative logarithm. This yields the composite optimization problem:
\begin{equation}\label{prob:initial}
    \min_{x \in \R^n} F(x) \equiv f_{data}(x) + \lambda \mathcal{R}(x),
\end{equation}
where $f_{data}(x) = -\log P(y|x)$ ensures data fidelity (e.g., the least-squares $\frac{1}{2}\norm{A x - y}^2$ under Gaussian noise), and $\mathcal{R}(x) = -\log P(x)$ is a regularization term encoding prior knowledge about the statistical distribution of images we wish to recover. The parameter $\lambda > 0$ controls the trade-off between fidelity to the measurements and the prior. Historically, analytical priors were widely used to address this problem, with milestones such as Tikhonov regularization \cite{Engl-etal-1996} and the Total Variation (TV) model \cite{Rudin-etal-1992}.

When the regularizer $\mathcal{R}$ is convex but nonsmooth (as is the case with $\ell_1$-norm sparsity priors or TV), problem \eqref{prob:initial} cannot be solved via standard gradient descent. Instead, it is traditionally addressed via proximal gradient schemes \cite{Combettes-Pesquet-2011,Combettes-Wajs-2005}, which alternate a forward gradient step on the smooth fidelity term with a backward proximal step on the regularizer. The iterative scheme is given by:
\begin{equation}\label{eq:fb_scheme}
    \begin{aligned}
        &z^k = x^k - \alpha_k \nabla f_{data}(x^k),  \\
        &x^{k+1} = \prox_{\alpha_k \lambda \mathcal{R}}(z^k),
    \end{aligned}
\end{equation}
where $\alpha_k > 0$ is the step size and the proximity operator is defined as 
\begin{equation}\nonumber
    \prox_{\tau \mathcal{R}}(z) = \argmin_{x} \left\{ \mathcal{R}(x) + \frac{1}{2\tau}\norm{x - z}^2 \right\}.
\end{equation}
Notably, evaluating the proximity operator is mathematically equivalent to solving a pure denoising problem on $z^k$ with assumed Gaussian noise level proportional to $\sqrt{\tau}$. 

While analytical priors are mathematically tractable, they struggle to capture the complex, high-dimensional manifolds of natural images. Then, in the last decade, the surge in the availability of large amounts of data, the development of statistical learning theory \cite{Hastie-etal-2009}, and the impressive results achieved by neural network models have also influenced the inverse problems community, giving rise to data-driven approaches \cite{Arridge-etal-2019,Bubba-2025}. Examples of these methods are unrolled optimization algorithms \cite{Bertocchi-etal-2020,Gregor-LeCun-2010} and Plug-and-Play (PnP) \cite{Kamilov-etal-2023} schemes. We focus on the latter, which has become a dominant paradigm in computational imaging due to its modularity and empirical performance. 

{\em Framework of the contribution}. The Plug-and-Play (PnP) framework was introduced as a way to leverage the powerful denoising capabilities of modern Deep Neural Networks (DNNs) within the structure of traditional optimization algorithms. In the seminal paper \cite{Venkatakrishnan-etal-2013} it was observed that proximal step in \eqref{eq:fb_scheme} acts simply as an image denoiser. They proposed to replace the proximity operator with an off-the-shelf, highly expressive pre-trained deep denoiser, which we denote as $\Dsigma$. The PnP forward--backward (FB) algorithm simply becomes
\begin{equation}\label{eq:pnp_fb_scheme}
x^{k+1} =\Dsigma( x^k - \alpha_k \nabla f_{data}(x^k)).
\end{equation}
PnP methods have since demonstrated state-of-the-art empirical performance across numerous imaging domains. However, divorcing the denoising step from a strictly defined optimization objective means that the algorithm is no longer guaranteed to minimize a specific global function $F(x)$, making theoretical convergence analysis extremely challenging.

To bridge the gap between empirical success and theoretical guarantees, a major breakthrough was achieved in establishing the existence of regularizers parameterized by neural networks for which the proximal operator is exactly a denoiser. A remarkable result in this framework was taken in \cite{Hurault-etal-2022b}, where the authors introduced the Gradient-Step (GS) denoiser which takes the form of a gradient descent step:
\begin{equation*}
\Dsigma = \id - \nabla \gsigma, \qquad \gsigma = \meta \norm{x - N_\sigma(x)}^2,
\end{equation*}
being $\id$ the identity operator and $N_\sigma$ a parametrized differentiable neural network. Moreover, in the same paper the following FB scheme applied to the functional $f_{data} + \lambda \gsigma$ was proposed, 
\begin{equation*}
      x^{k+1} = \prox_{\tau f_{data}} \left(\tau \lambda \Dsigma(x^{k}) + (1 - \tau \lambda) x^{k}\right)
        = \prox_{\tau f_{data}} \left(x^{k} - \tau \lambda \nabla g_\sigma(x^{k})\right), 
\end{equation*}
where the proximal step is computed with respect to  the (convex) data-fidelity term.
This scheme turns out to be nonpractical in many imaging inverse problems, since the proximal operator of the data-fidelity term is not computable in closed form. In the follow-up work \cite{Hurault-etal-2022a}, given the constraint that $\nabla \gsigma$ is $L_{\gsigma}$--Lipschitz with $L_{\gsigma} < 1$, the same authors proved that the GS denoiser corresponds to the proximity operator of an explicit prox-regular function $\phisigma$, i.e., $D_\sigma = \prox_{\phi_\sigma}$, with $\prox_{\phi_\sigma}$ single valued. Hence, they consider iteration \eqref{eq:pnp_fb_scheme} with $\alpha_k = \lambda$, showing that it corresponds to a FB step applied to the functional $\lambda f_{data} + \phisigma$ with a fixed step size equal to one. The convergence of this PnP-FB method is then proved for possibly nonconvex $f_{data}$ under a constraint on the regularization parameter $\lambda$, which must satisfy $\lambda L_{f_{data}} <1$, where $L_{f_{data}}$ is the Lipschitz constant of $\nabla f_{data}$. Another step further was taken in \cite{Hurault-etal-2024}, where the authors proved that the regularizer $\phisigma$ can be expressed for some constant $K \in \R$ as
\begin{equation} \label{eq:hurault_phi}
    \phisigma(x) = \hsigma^*(x) - \meta\norm{x}^2 + K, \qquad \forall x \in \im(\Dsigma),
\end{equation}
where $\hsigma$ satisfies $\Dsigma=\nabla \hsigma$ and $\hsigma^*$ denotes its convex conjugate. Formula \eqref{eq:hurault_phi} shows that the function $\phisigma$ is weakly convex,  allowing the authors to perform their convergence analysis under the assumption $\lambda L_{f_{data}} < \frac{L_{\gsigma} + 2}{L_{\gsigma} + 1}$. Moreover, by formulating a relaxed version of the PnP-FB scheme, they proved convergence in the convex setting further lightening the constraints on $\lambda$.

Despite the monumental theoretical step provided by the GS denoiser, the derived PnP algorithms based on the FB splitting in \eqref{eq:pnp_fb_scheme} still suffer from major limitations 
\begin{enumerate}
    \item when the data-fidelity is nonconvex, convergence is only guaranteed with the constraint $\lambda L_{f_{data}} < \frac{L_{\gsigma} + 2}{L_{\gsigma} + 1}$, which is a strong limitation in case of low-noise levels and large Lipschitz constants of the data-fidelity gradient;
    \item the constraints can be relaxed only in the convex setting, which limits the applicability of the method, and even in this case computing the exact bound is challenging;
    \item due to the fact that $\prox_{\tau \phisigma}$ is not computable in closed form for an arbitrary scaling factor $\tau \neq 1$, the step--size of the algorithm is fixed and equal to one, which is not optimal for convergence speed.
\end{enumerate}
As an additional difficulty, the function $\phi_\sigma$ can be evaluated in closed form only on the points of the set $\mathrm{Im}(D_\sigma)$. 

{\em Proposed approach}. In this work, we propose a PnP-FB algorithm that overcomes all the limitations mentioned above while preserving the state-of-the-art convergence properties. 
The key point is a new way to split the objective functional $f_{data}+\lambda\phi_\sigma$, based on the dual formulation \eqref{eq:hurault_phi}.
Our splitting requires the computation of $\prox_{\tau h^*_\sigma}$, which is not available in closed form but can be approximated within a given tolerance by an inner procedure based on the evaluation of the denoiser $D_\sigma$. The inner procedure is tailored such that it outputs a point where the function $h^*_\sigma$ can be exactly evaluated. This allows us to design a line--search loop to adaptively compute the step length parameter.

The overall algorithm therefore takes the form of a line--search based inexact FB method, whose convergence analysis builds upon the abstract convergence scheme developed in \cite{Bonettini-etal-2023}. In particular, we prove that any limit point of the iterate sequence is a stationary point of the function $f_{data}+\lambda\phi_\sigma$ and, adopting the classical Kurdyka--Łojasiewicz (KL) \cite{Attouch-etal-2013} framework, we are able to guarantee also global convergence, without any restriction on the regularization parameter $\lambda$ and without requiring convexity of the data-fidelity term.

{\em Related work}. The pursuit of provably convergent PnP methods has been a primary focus of the computational imaging community. Early attempts focused on strictly constraining the Lipschitz constant of the denoiser, as seen in \cite{Ryu-etal-2019}. A parallel approach emerged through Regularization by Denoising (RED) \cite{Romano-etal-2017}, which defined an explicit objective using the denoiser directly, though later works demonstrated that RED acts as a formal gradient only under highly restrictive conditions \cite{Reehorst-Schniter-2018}. More recently, the framework has been extended to denoising diffusion models \cite{Renaud-etal-2024,Song-etal-2020} and flow-matching denoisers \cite{Martin-etal-2025}.

When proximity operators lack closed-form solutions, exact FB algorithms cannot be applied and the proximity operator must be computed inexactly. The foundational convergence rates of inexact proximal-gradient methods were established in \cite{Schmidt-etal-2011} and later generalized in \cite{Villa-etal-2013}. However, allowing inexactness while simultaneously applying a line--search is a nontrivial challenge because the line--search acceptance criteria generally require exact evaluation of the objective function. Crucial progress was made by Bonettini et al. \cite{Bonettini-etal-2020}, who designed variable-metric and line--search strategies for inexact operators. In the last years, this culminated in a generalized abstract KL convergence frameworks \cite{Bonettini-etal-2023} and in the PHILA algorithm \cite{Bonettini-etal-2024}. In our work, we bridge this optimization theory with the PnP imaging paradigm, tailoring the inexact KL framework to absorb the challenges posed by the neural network-based weakly-convex regularizer.

{\em Outline of the paper}. The paper is organized as follows. In Section 2, we present the general formulation of the inverse problem, highlighting the role and properties of the functions involved. In Section 3, we describe in detail our novel line--search-based Plug-and-Play FB algorithm, with particular emphasis on the inexact computation of the proximal point. The convergence analysis of the proposed method is presented in Section 4. Finally, Section 5 reports numerical experiments on image deblurring problems in the presence of different noise statistics, while our final remarks are offered in Section 6.

\section{Formulation of the problem and novel splitting strategy}

As explained in the introduction, we are interested in solving problem \eqref{prob:initial} with a PnP-FB scheme where the proximal step is replaced by a Gradient-Step denoiser $\Dsigma$. Hence, we minimize the functional
\begin{equation}\label{eq:F_first}
    F(x) = f_{data}(x) + \lambda \phisigma(x),
\end{equation}
where $\phisigma$ is defined as in \eqref{eq:hurault_phi}. In the following, we will always make the following assumption:
\begin{assumption} \label{as:data_fidelity}
The data-fidelity term $f_{data}$ is bounded from below, smooth and has a $L_{f_{data}}$--Lipschitz continuous gradient, i.e., 
\begin{equation*}
\norm{\nabla f_{data}(x) - \nabla f_{data}(y)} \leq L_{f_{data}} \norm{x - y}, \quad \forall x, y \in {\rm{dom}}(f_{data}).
\end{equation*}
\end{assumption}
In the following proposition we recall the properties of $\phisigma$.
\begin{proposition}\cite[Proposition 3.1]{Hurault-etal-2022a}, \cite[Proposition 3]{Hurault-etal-2024}  \label{eq:hurault_prop}
Let $\gsigma : \R^n \to \R$ be a $C^{k+1}$ function with $k \geq 1$ such that $\nabla \gsigma$ is $L_{\gsigma}$--Lipschitz continuous, with $L_{\gsigma} < 1$. Let also $\hsigma : \R^n \to \R$ be a $C^{k+1}$ function defined as $h_\sigma(x) = \frac 1 2 \|x\|^2-g_\sigma(x)$ for all $x \in \mathbb{R}^n$. Setting $\Dsigma := \id - \nabla \gsigma = \nabla \hsigma$, then
\begin{itemize}
\item[(i)] $h_\sigma$ is $(1-L_{g_\sigma})$--strongly convex, i.e.,
\begin{equation*}
h_\sigma(y) \geq h_\sigma(x) + \dual{\nabla h_\sigma(x)}{y-x} + \dfrac{1-L_{g_\sigma}}{2}\|y-x\|^2 \qquad \forall x,y \in {\rm{dom}}(h_\sigma).
\end{equation*}
\item[(ii)] $\Dsigma$ is injective, $\mathrm{Im}(\Dsigma)$ is open and, $\forall x \in \R^n$, there exists $\hat{\phi}_\sigma$ such that $\Dsigma = {\rm prox}_{\hat{\phi}_\sigma}$, with 
\begin{equation*}
    \hat{\phi}_\sigma(x) = \begin{cases}
        g_\sigma(D_\sigma^{-1}(x)) - \frac 1 2 \|D_\sigma^{-1}(x)-x\|^2 &\text{if } x\in\mathrm{Im}(D_\sigma)\\
        +\infty & {\text{otherwise}}
    \end{cases}.
\end{equation*}
    \item[(iii)] There exists a $\frac{L_{\gsigma}}{1 + L_{\gsigma}}$- weakly covex function $\phisigma \colon \R^n \to \R$ such that $\Dsigma = {\rm prox}_{\phisigma}$. Moreover, there exists a constant $K \in \R$ such that for all $x\in \mathrm{Im}(D_\sigma)$,
    \begin{equation} \label{eq:phi_def_full}
         \phisigma(x) = \hsigma^*(x) - \frac{1}{2} \norm{x}^2 + K = \hat{\phi}_\sigma(x) + K,
     \end{equation}
     where $h_\sigma^*$ is the Fenchel conjugate of $h_\sigma$.
\end{itemize}
\end{proposition}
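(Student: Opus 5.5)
We prove the three items in order; each rests on the identity $\hsigma = \meta\norm{\cdot}^2 - \gsigma$ and on the Lipschitz bound $L_{\gsigma}<1$.

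\emph{Item (i).} Since $\nabla \gsigma$ is $L_{\gsigma}$--Lipschitz, the descent lemma gives
\begin{equation*}
\gsigma(y) \le \gsigma(x) + \dual{\nabla \gsigma(x)}{y-x} + \tfrac{L_{\gsigma}}{2}\norm{y-x}^2 .
\end{equation*}
We subtract this from the exact expansion
\begin{equation*}
\meta\norm{y}^2 = \meta\norm{x}^2 + \dual{x}{y-x} + \meta\norm{y-x}^2 .
\end{equation*}
This yields the claimed $(1-L_{\gsigma})$--strong convexity inequality for $\hsigma$ directly. As a consequence, $\Dsigma = \nabla \hsigma$ is strongly monotone: $\dual{\Dsigma(x)-\Dsigma(y)}{x-y} \ge (1-L_{\gsigma})\norm{x-y}^2$. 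Hence $\Dsigma$ is injective.

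\emph{Item (ii).} We have $\nabla \Dsigma = \id - \nabla^2\gsigma$, and $\norm{\nabla^2\gsigma}\le L_{\gsigma}<1$. So $\nabla\Dsigma$ is invertible everywhere, and by the inverse function theorem $\Dsigma$ is a local diffeomorphism, hence an open map; therefore $\mathrm{Im}(\Dsigma)$ is open.

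To show $\Dsigma=\prox_{\hat\phi_\sigma}$, we fix $z$ and must prove that $x=\Dsigma(z)$ is the unique minimizer of $\hat\phi_\sigma(x)+\meta\norm{x-z}^2$. Writing $x=\Dsigma(w)$, the objective becomes
\begin{equation*}
\psi_z(w) = \gsigma(w) - \meta\norm{w-\Dsigma(w)}^2 + \meta\norm{\Dsigma(w)-z}^2 .
\end{equation*}
Using $w-\Dsigma(w)=\nabla\gsigma(w)$ and expanding the squares, $\psi_z(w)$ equals, up to a constant depending only on $z$,
\begin{equation*}
\gsigma(w) - \dual{\nabla \gsigma(w)}{w-z} + \meta\norm{w-z}^2 - \dots ,
\end{equation*}
and a cleaner route is to recognize
\begin{equation*}
\psi_z(w) = \meta\norm{z}^2 - \big[\hsigma(w) + \dual{\nabla\hsigma(w)}{z-w}\big] .
\end{equation*}
Convexity of $\hsigma$ makes the bracket at most $\hsigma(z)$, with equality at $w=z$, and strictly less for $w\neq z$ by strong convexity. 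Hence $w=z$ is the unique minimizer, i.e.\ $\prox_{\hat\phi_\sigma}(z)=\Dsigma(z)$. Verifying this algebraic identity carefully is the key computation.

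\emph{Item (iii).} On $\mathrm{Im}(\Dsigma)$ we use Fenchel--Young equality. For $x=\nabla\hsigma(w)$,
\begin{equation*}
\hsigma^*(x)=\dual{x}{w}-\hsigma(w).
\end{equation*}
Substituting $\hsigma(w)=\meta\norm{w}^2-\gsigma(w)$ shows $\hsigma^*(x)-\meta\norm{x}^2 = \hat\phi_\sigma(x)$. We then need an extension $\phisigma$ defined on all of $\R^n$. The natural candidate is
\begin{equation*}
\phisigma := \hsigma^* - \meta\norm{\cdot}^2 + K ,
\end{equation*}
which is finite everywhere if $\hsigma^*$ is. This holds because $\hsigma$ is strongly convex, so it grows quadratically, and its conjugate is finite and $\tfrac{1}{1-L_{\gsigma}}$--smooth.

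The main obstacle is the weak convexity constant $\tfrac{L_{\gsigma}}{1+L_{\gsigma}}$. The bound $\hsigma \le \tfrac{1+L_{\gsigma}}{2}\norm{\cdot}^2$ up to affine terms ($\hsigma$ is $(1+L_{\gsigma})$--smooth) implies that $\hsigma^*$ is $\tfrac{1}{1+L_{\gsigma}}$--strongly convex. Then $\phisigma + \tfrac{L_{\gsigma}}{2(1+L_{\gsigma})}\norm{\cdot}^2 = \hsigma^* - \tfrac{1}{2(1+L_{\gsigma})}\norm{\cdot}^2$ is convex, which gives the claimed constant.

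Finally, we check $\Dsigma=\prox_{\phisigma}$ on all of $\R^n$. The prox objective is
\begin{equation*}
\hsigma^*(x) - \dual{x}{z} + \text{const},
\end{equation*}
which is strictly convex. Its minimizer satisfies $z\in\partial\hsigma^*(x)$, i.e.\ $x=\nabla\hsigma(z)=\Dsigma(z)$. This also recovers (ii) independently, and it requires no restriction of $\phisigma$ to $\mathrm{Im}(\Dsigma)$.
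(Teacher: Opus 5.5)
Your proof is correct. I checked in particular the identity $\psi_z(w)=\meta\norm{z}^2-\big[\hsigma(w)+\dual{\nabla\hsigma(w)}{z-w}\big]$, and it holds exactly. The preceding line with ``up to a constant'' and ``$-\dots$'' can therefore be dropped, since $\gsigma(w)-\dual{\nabla\gsigma(w)}{w-z}+\meta\norm{w-z}^2$ already equals $\psi_z(w)$.

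The paper does not prove this proposition; it imports it from Hurault et al. There the representation $\Dsigma=\prox_{\hat\phi_\sigma}$ comes from the Gribonval--Nikolova characterization of proximity operators: a map is a prox iff it is a selection of $\partial\psi$ for some convex l.s.c.\ $\psi$, here $\psi=\hsigma$. The weak convexity constant $1-1/L_{D_\sigma}=\frac{L_{\gsigma}}{1+L_{\gsigma}}$ comes from the companion result linking the Lipschitz constant of a prox to weak convexity, and openness of $\mathrm{Im}(\Dsigma)$ from the inverse function theorem.

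Your argument is a self-contained specialization of that route. The $\psi_z$ identity is the Bregman-gap computation underlying the characterization. The duality between $(1+L_{\gsigma})$-smoothness of the convex function $\hsigma$ and $\frac{1}{1+L_{\gsigma}}$-strong convexity of $\hsigma^*$ replaces the Lipschitz-to-weak-convexity corollary. State that step as this standard duality, using convexity from (i), rather than via ``bounded by a quadratic up to affine terms''.

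What your version buys is that $\phisigma=\hsigma^*-\meta\norm{\cdot}^2+K$ is finite and weakly convex on all of $\R^n$ from the start. This is exactly the functional the paper minimizes in \eqref{eq:F}. Combined with the surjectivity argument of Remark \ref{remark:bijection}, it gives $\mathrm{Im}(\Dsigma)=\R^n$, so your inverse-function-theorem step is valid but not needed. The cited route, in exchange, sits inside a general characterization of proximity operators and yields further properties, such as regularity of $\hat\phi_\sigma$ on $\mathrm{Im}(\Dsigma)$, that this statement does not require.

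Your closing remark that (iii) recovers (ii) is sound even without surjectivity. Indeed $\hat\phi_\sigma\ge\phisigma-K$, with equality on $\mathrm{Im}(\Dsigma)$, and that set contains the minimizer $\Dsigma(z)$.
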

The choice of $\gsigma$ is crucial to ensure the existence of the implicit regularizer $\phisigma$ and to obtain convergence guarantees. In particular, it is required that:
\begin{enumerate}
    \item $\gsigma(x) = \meta \norm{x - N_\sigma(x)}^2$, where $N_\sigma$ is a DRUNet with softplus activation functions;
    \item the network is trained to impose that $\nabla \gsigma$ is $L_{\gsigma}$--Lipschitz continuous, with $L_{\gsigma} < 1$.
\end{enumerate}
We refer to \cite{Hurault-etal-2022a} for more details on the training procedure and the choice of the architecture. For our purposes, we focus on the properties of $\phisigma$ and $\hsigma$ that are relevant for the convergence analysis. In particular, we point out that $\Dsigma$ is a Lipschitz continuous operator with constant $L_{D_{\sigma}} = L_{\gsigma} + 1< 2$.

A natural FB approach to minimize the function $F$ in \eqref{eq:F_first} associated to the splitting $f_{data}, \lambda\phi_\sigma$ requires the evaluation of the proximity operator associated to $\tau\phi_\sigma$, where $\tau$ is the product of the regularization parameter $\lambda$ with the stepsize used in the forward step. However, although we can evaluate $\Dsigma = \prox_{\tau \phisigma}$ for $\tau = 1$, we cannot compute $\prox_{\tau \phisigma}$ in closed form for $\tau \neq 1$, because this would require inverting the denoiser $\Dsigma$, which is intractable for Deep Neural Networks. Hence, we propose a novel splitting strategy that circumvents this issue and computes an approximation of $\prox_{\lambda h^*_\sigma}$ via an inner optimization procedure on the primal function $\hsigma$, whose gradient is the denoiser $\Dsigma$. More precisely, in view of \eqref{eq:phi_def_full}, we express the objective function \eqref{eq:F_first} as  
\begin{equation}\label{eq:F}
    F(x) = f_{data}(x) + \lambda \left( h^*_{\sigma}(x) - \meta\norm{x}^2 \right) \qquad \forall x \in \mathbb{R}^n
\end{equation}
 and we split it as follows:
\begin{align}
    f_0(x) &= f_{data}(x) - \frac{\lambda}{2}\norm{x}^2 \quad &\text{(smooth, possibly nonconvex)} \label{eq:f0} \\
    f_1(x) &= \lambda \hsigma^*(x) \quad &\text{(convex)} \label{eq:f1}
\end{align}
Observe that by Assumption \ref{as:data_fidelity}, $f_0$ is smooth with $L_{f_0}$--Lipschitz continuous gradient, where $L_{f_0} = L_{f_{data}} + \lambda$. Moreover, since $h_\sigma$ is strongly convex, $h^*_\sigma$ is differentiable with Lipschitz continuous gradient. Based on the splitting \eqref{eq:f0}--\eqref{eq:f1}, we propose an inexact FB method to minimize the function $F$ in \eqref{eq:F}, as described in the following section.

\begin{remark}\label{remark:bijection}
In general, the objective functional \eqref{eq:F} corresponds to \eqref{eq:F_first} only on $\mathrm{Im}(\Dsigma)$, where $\phisigma$ admits the explicit expression \eqref{eq:phi_def_full}.
 However, under the assumptions of Proposition \ref{eq:hurault_prop}, $\Dsigma$ is a bijection of $\R^n$. Indeed, injectivity follows from the strong convexity of $\hsigma$, which implies the strong monotonicity of $\nabla\hsigma = \Dsigma$. As for surjectivity, fix $x \in \R^n$ and consider $p \mapsto \inner{p}{x} - \hsigma(p)$. By the $(1-L_{\gsigma})$--strong convexity of $\hsigma$, this function is strongly concave, hence coercive towards $-\infty$, and its maximum is attained at a unique $p^* \in \R^n$; the first-order optimality condition reads $x = \nabla\hsigma(p^*) = \Dsigma(p^*)$. Hence $\mathrm{Im}(\Dsigma) = \R^n$ and the two functionals \eqref{eq:F} and \eqref{eq:F_first} coincide in the whole space, up to the additive constant $\lambda K$, which affects neither the minimizers nor the stationary points.
\end{remark}

\begin{remark} \label{remark:surrogate}
The arguments in Remark \ref{remark:bijection}, like all the results in the Gradient-Step denoiser literature \cite{Hurault-etal-2022a,Hurault-etal-2022b,Hurault-etal-2024}, rely on the assumption that $\nabla\gsigma$ is globally $L_{\gsigma}$--Lipschitz with $L_{\gsigma} < 1$. In practice, this constraint is only softly enforced at training time, penalizing the spectral norm of $\nabla^2\gsigma$ on noisy natural images \cite{Hurault-etal-2022a}; for the trained network, it can therefore be expected to hold only approximately. In this respect, the minimization of $F$ in \eqref{eq:F}, which is well defined on the whole $\R^n$ regardless of the above assumption ($\hsigma^*$ being finite everywhere whenever $\hsigma$ is strongly convex), should be regarded as the optimization problem addressed by the proposed algorithm.
\end{remark}

\section{Algorithm overview and inexact proximal computation}

To minimize the objective function $F(x)$ defined in our splitting strategy, we propose an iterative scheme that alternates between gradient steps, inexact proximal updates, and an adaptive line--search. We sketch the algorithmic framework:
\begin{align}
    &z^k = x^k - \alpha_k \nabla f_0(x^k), \label{eq:step_1} \\
    &\tilde{p}^k \approx \prox_{\hsigma/(\alpha_k \lambda )} \left (\frac{z^k}{\alpha_k \lambda} \right ), \label{eq:step_2} \\
    &\tk{y} = \Dsigma(\tilde{p}^k), \label{eq:step_3} \\
    &x^{k+1} = x^k + \eta_k (\tk{y} - x^k), \quad \eta_k \in [0, 1]. \label{eq:step_4}
\end{align}
Before diving into the technical details, we briefly describe the conceptual steps performed at each iteration:
\begin{enumerate}
    \item \textbf{Forward Step \eqref{eq:step_1}.} We perform a gradient step on the smooth part $f_0$ to obtain an intermediate point $z^k$. Our analysis is almost completely independent of the step--size sequence $\{ \alpha_k \}_{k \in \N}$. For theoretical reasons, we must only impose 
\begin{equation}\label{alphalim}
    0 < \alpha_{\text{min}} \leq \alpha_k \leq \alpha_{\text{max}}, \quad  \forall k \in \N,
\end{equation}
for some positive constants $\alpha_{\text{min}}, \alpha_{\text{max}}$. In practice, $\alpha_k$ can be tuned to improve convergence speed.
    \item \textbf{Inexact Backward Step \eqref{eq:step_2}--\eqref{eq:step_3}.} We compute an inexact proximal update $\tilde{p}^k$ for the convex part $f_1$ by shifting the computation to the primal function $\hsigma$ via the Moreau decomposition. This step is the core focus of Sections \ref{sec:3.1}--\ref{sec:3.2}.
    \item \textbf{Line--search Step \eqref{eq:step_4}.} We verify if the step from $x^k$ to $\tk{y}$ provides a sufficient decrease of the objective. If not, we backtrack the step--size and repeat. Since we cannot evaluate $f_1$ exactly, we introduce a novel surrogate merit function. This procedure is detailed in Section \ref{sec:3.3}. 
\end{enumerate}

\subsection{Proximal calculation via inner minimization}\label{sec:3.1}

As already mentioned, we employ the Moreau decomposition \cite[Theorem 14.3, (iii)]{Bauschke-Combettes-2011} to shift the computation from the conjugate $\hsigma^*$ to the primal $\hsigma$:
\begin{equation}\label{eq:hatyk}
    \hat{y}^k := \prox_{\alpha_k \lambda \hsigma^*}(z^k) = z^k - \alpha_k \lambda \prox_{\hsigma/(\alpha_k \lambda )} \left (\frac{z^k}{\alpha_k \lambda} \right ) = z^k - \alpha_k \lambda \hat{p}^k,
\end{equation}
where
\begin{equation} \label{eq:inner_problem}
\hat{p}^k := \prox_{\hsigma/(\alpha_k\lambda)}\left (\frac{z^k}{\alpha_k \lambda} \right ) = \argmin_{p \in \mathbb{R}^n} \underbrace{\frac{1}{2}\norm{ p - \frac{z^k}{\alpha_k \lambda}}^2 + \frac{1}{\alpha_k\lambda} \hsigma(p)}_{:=G_k(p)}.
\end{equation}
Since $\hsigma$ is $(1 - L_{\gsigma})$--strongly convex, the function $G_k$ in \eqref{eq:inner_problem} is $\mu_{G_k}$--strongly convex with
\begin{equation}\label{eq:muG}
\mu_{G_k} = 1 + \frac{1-L_{g_{\sigma}}}{\alpha_k\lambda}\geq 1.
\end{equation}
Moreover, the relation $\Dsigma = \nabla \hsigma$ leads to an explicit formula for its gradient:
\begin{equation}\label{eq:gradG}
    \nabla G_k(p) =  p - \frac{z^k}{\alpha_k \lambda} + \frac{1}{\alpha_k \lambda} \Dsigma(p) \qquad \forall p \in \mathbb{R}^n.
\end{equation}
Hence, an approximate solution $\tilde{p}^k$ of \eqref{eq:inner_problem} can be computed e.g. by performing a gradient descent on $G_k$
arrested when relation
\begin{equation}\label{eq:stop_crit_G}
\| \nabla G_k(\tilde{p}^k) \| \leq \varepsilon_k
\end{equation}
is satisfied, where $\varepsilon_k$ is a suitable positive tolerance.

As concerns the computation of an approximate solution $\tilde{y}^k$ of \eqref{eq:hatyk}, we remark that the straightforward approach of applying the explicit formula $\tilde{y}^k = z^k - \alpha_k \lambda \tilde{p}^k$ in \eqref{eq:hatyk} is not practical, since $\tilde{y}^k$ would not be assured to belong to $\mathrm{Im}(\Dsigma)$; therefore, we would be missing an explicit way to compute $f_1(\tilde{y}^k)$, which, as we will see later, is needed to define a sufficient decrease condition. To overcome this issue, first we observe that from equations \eqref{eq:hatyk}--\eqref{eq:gradG} and since $\nabla G_k(\hat{p}^k) = 0$ one obtains that
\begin{equation}
    \nabla \hsigma (\hat{p}^k) = \hat{y}^k.
\end{equation}
Therefore, the approximation $\tilde{y}^k$ can be computed as 
\begin{equation} \label{eq:ytildeDenoised}
    \tilde{y}^k = \nabla \hsigma (\tilde{p}^k) = \Dsigma(\tilde{p}^k).
\end{equation}
Using this new formulation, we show that we can control the distance between the exact prox and its approximation:
\begin{equation}\nonumber
    \norm{\tilde{y}^k - \hat{y}^k} = \norm{\Dsigma ( \tilde{p}^k ) - \Dsigma ( \hat{p}^k )} \leq L_{D_{\sigma}} \norm{ \tilde{p}^k - \hat{p}^k } \leq \frac{L_{D_{\sigma}}}{\mu_{G_k}} \norm{\nabla G_k(\tilde{p}^k)} \leq \frac{L_{D_{\sigma}}}{\mu_{G_k}}\varepsilon_k,
\end{equation}
where the second inequality follows from the strong convexity of $G_k$ and the last one from the stopping criterion \eqref{eq:stop_crit_G}. Using the fact that $\mu_{G_k} > 1$ and $L_{D_\sigma} < 2$, this leads to
\begin{equation}\label{eq:distyk_ine}
    \norm{\tilde{y}^k - \hat{y}^k} \leq 2\varepsilon_k.
\end{equation}
In addition, if \eqref{eq:ytildeDenoised} holds we are able to explicitly compute $f_1(\tilde y^k)$ and $h_\sigma(\tilde p^k)$.
Indeed, from \eqref{eq:ytildeDenoised} and \eqref{eq:phi_def_full}, we derive the following explicit formula
\begin{align*}
    f_1(\tilde{y}^k) = \lambda \hsigma^*(\tilde{y}^k) &= \lambda\Big( g_\sigma(D_\sigma^{-1}(\tilde y^k))-\frac 1 2 \|D_\sigma^{-1}(\tilde y^k) -\tilde y^k\|^2 +\frac1 2 \|\tilde y^k\|^2 \Big)\\
    &= \lambda \Big( g_\sigma(\tilde p^k) +\dual{\tilde{y}^k}{\tilde{p}^k} - \frac 1 2 \|\tilde p^k\|^2 \Big).
\end{align*}
Moreover, by the Fenchel--Young inequality and the relation \eqref{eq:ytildeDenoised} we obtain \cite[Proposition 16.9]{Bauschke-Combettes-2011}
\begin{equation}\nonumber
  \hsigma^*(\tilde{y}^k) =   \dual{\tilde{y}^k}{\tilde{p}^k} - \hsigma  (\tilde{p}^k)
\end{equation}
which, combined with the previous relation, yields $h_\sigma (\tilde p^k)= \frac 1 2 \|\tilde p^k\|^2 - g_\sigma(\tilde p^k)$.


\subsection{Properties of the inexact proximal gradient point}\label{sec:3.2}

In this section, we state some crucial properties of the point $\tilde y^k$ obtained as described in the previous section. To this end, we follow the notation in \cite{Bonettini-etal-2024} and observe that $\hat{y}^k = \prox_{\alpha_k f_1}(z^k)$ is the unique point such that 
\begin{equation}\label{eq:Hkfirstdef}
\hat{y}^k = \argmin_{y \in \R^n} H_k(y), \qquad H_k(y) = \dual{\nabla f_0(x^k)}{y - x^k} + \frac{1}{2 \alpha_k } \norm{y - x^k}^2 + f_1(y) - f_1(x^k).
\end{equation}
Notice that $H_k(x^k) = 0$, therefore $
H_k(\hat y^k) \leq 0$, and $H_k(\hat y^k) < 0$ whenever $x^k\neq \hat y^k$. On the other hand, $x^k=\hat y^k$ implies that $x^k$ is a stationary point for the function $F$, i.e., $\nabla F(x^k) = 0$. In general, when $f_1$ is convex, $H_k$ is $\theta$--strongly convex, with $\theta =  1/\alpha_{\max}$. Moreover, since in our case $\hsigma$ is $(1 - L_{\gsigma})$--strongly convex, the gradient of $f_1=\lambda \hsigma^*$ is Lipschitz continuous with constant $\frac{\lambda}{1 - L_{\gsigma}}$. Then, the gradient of $H_k$ is $\mu$--Lipschitz continuous with $\mu \geq 1/{\alpha_{\min}} + \lambda/(1-L_{g_\sigma
})$.  

The function $H_k$ plays a crucial role in our algorithm since if $H_k(\tilde y ^k) < 0$, the vector $d^k = \tilde y^k - x^k$ computed in \eqref{eq:step_3} is a descent direction for the objective function $F$ \cite[Proposition 2.2]{Bonettini-etal-2016}. However, the negative sign of $H_k(\tilde y^k)$ alone does not guarantee suitable properties of the descent direction $d^k$, needed to perform the convergence analysis of iteration \eqref{eq:step_1}--\eqref{eq:step_4}. Such properties are actually obtained if the value $H_k(\tilde y^k)$  is sufficiently close to the optimal one, $H_k(\hat y^k)$. For this reason, it is important to estimate the distance from the optimal value. In our settings, 
if $\tilde y^k$ is the point defined in \eqref{eq:ytildeDenoised}, with $\tilde p^k$ satisfying \eqref{eq:gradG}, the $\mu$--smoothness of $H_k$ and the Descent Lemma imply
\begin{equation} \label{eq:ineqH}
    H_k (\tilde{y}^k) - H_k (\hat{y}^k) \leq \frac{\mu}{2} \norm{\tilde{y}^k - \hat{y}^k}^2 \leq 2{\mu}{\varepsilon}_k^2.
\end{equation} 
Hence, up to some constants, we can control the distance $H_k (\tilde{y}^k) - H_k (\hat{y}^k)$ by means of the tolerance parameter $\varepsilon_k$, without knowing the optimal value $H_k (\hat{y}^k)$. Before giving more details on how the tolerance parameter $\varepsilon_k$ is chosen, we explain our line--search approach, because the two issues are closely related.



\subsection{Line--search procedure}\label{sec:3.3}

To ensure convergence in the nonconvex setting, we employ an Armijo-like line search. Considering the descent direction $d^k = \tilde{y}^k - x^k$, the new point is set as 
\begin{equation}
    x^{k+1} = x^k + \eta_k d^k, \quad \eta_k \in (0,1],
\end{equation}
where the line--search parameter $\eta_k$ must satisfy a sufficient decrease condition. A very common example of such conditions is based on the objective function values and writes as 
\begin{equation} \label{eq:armijo0}
    F (x^{k}+\eta_k d^k) \leq F (x^k) + \omega \eta_k \Delta_k,
\end{equation}
where $\Delta_k$ is a negative quantity expressing the expected decrease and $\omega\in(0,1)$ is a fixed parameter \cite{Bonettini-etal-2024}. The value of $\eta_k$ is typically found with a backtracking procedure, i.e., by defining $\eta_k = \delta^{m_k}$, with $\delta \in (0,1)$ and $m_k$ equal to the smallest non--negative integer for which \eqref{eq:armijo0} is satisfied. Moreover, borrowing the ideas in \cite{Bonettini-etal-2016,Bonettini-etal-2020,Bonettini-etal-2021} the sufficient decrease quantity is usually set as $\Delta_k = H_k(\tilde y^k)$. However, this approach cannot be directly adopted in our settings, due to the lack of an explicit way to compute $f_1(x^k +\eta d^k)$ except for the case $\eta = 1$, which is needed to obtain both the quantities $F(x^k+\eta d^k)$ and $H_k(\tilde y^k)$. We overcome this difficulty by introducing an upper estimate for $f_1(x^k)$ in the following way. 
From the convexity of $f_1$, for $\eta \in (0,1]$ it holds that
\begin{equation}\nonumber
    f_1(x^k + \eta d^k) \leq \eta f_1(\tilde{y}^k) + (1 - \eta) f_1(x^k).
\end{equation}
Initializing $U^0 = f_1(x^0)$ for some initial guess $x^0$, at each step we can obtain an upper estimate of $f_1(x^{k+1})$ as follows
\begin{equation}
U^{k+1} := \eta_k f_1(\tilde{y}^k) + (1 - \eta_k) U^k.
\end{equation}
 We next show that the sequence $\{U^k\}_{k\in\mathbb{N}}$ provides an upper estimate of $f_1(x^k)$ for all $k$.
 

\begin{proposition}\label{prop:Uk}
    For all $k \in \N$ it holds that
    \begin{equation}
        f_1(x^k) \leq U^k.
    \end{equation}
\end{proposition}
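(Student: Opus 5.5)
We argue by induction on $k$, combining the convexity of $f_1$ with the recursive definition of $U^{k+1}$ as a convex combination of $f_1(\tilde y^k)$ and $U^k$.

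\emph{Base case.} Since $U^0 = f_1(x^0)$ by construction, the inequality holds with equality at $k=0$.

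\emph{Inductive step.} Assume $f_1(x^k)\le U^k$ for some $k$. Recall that $x^{k+1} = x^k+\eta_k d^k = (1-\eta_k)x^k + \eta_k \tilde y^k$ with $\eta_k\in(0,1]$. Two facts are needed.
\begin{itemize}
\item $f_1=\lambda h_\sigma^*$ is convex, being a positive multiple of a Fenchel conjugate.
\item $f_1$ is finite everywhere, since $h_\sigma$ is strongly convex (Proposition \ref{eq:hurault_prop}(i)) and hence $h_\sigma^*$ is real valued on $\R^n$. This ensures that Jensen's inequality involves no $+\infty$ values.
\end{itemize}
Using these, we obtain
\begin{equation*}
f_1(x^{k+1}) \le \eta_k f_1(\tilde y^k) + (1-\eta_k) f_1(x^k) \le \eta_k f_1(\tilde y^k) + (1-\eta_k) U^k = U^{k+1}.
\end{equation*}
The second inequality uses the induction hypothesis together with $1-\eta_k\ge 0$. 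The final equality is the definition of $U^{k+1}$. This closes the induction.

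There is no substantial obstacle. The only points to verify are that $1-\eta_k\ge 0$, so that the inequality direction is preserved, and that $f_1(\tilde y^k)$ is exactly the computable value obtained in Section \ref{sec:3.1} through $\tilde y^k=\Dsigma(\tilde p^k)$, so that $U^{k+1}$ is well defined. The admissible range of $\eta_k$ also includes $\eta_k=0$, as allowed in \eqref{eq:step_4}. In that case $x^{k+1}=x^k$ and $U^{k+1}=U^k$, so the claim holds trivially.
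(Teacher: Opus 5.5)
Your proof is correct and is the same induction the paper uses: equality at $k=0$ from $U^0=f_1(x^0)$, then the induction hypothesis (with $1-\eta_k\ge 0$) combined with convexity of $f_1$ along $x^{k+1}=\eta_k\tilde y^k+(1-\eta_k)x^k$. Your extra remarks on the finiteness of $f_1=\lambda h_\sigma^*$ and the trivial case $\eta_k=0$ are harmless additions that the paper leaves implicit.
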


\begin{proof}
    The proof is obtained by induction. The inequality is satisfied as an equality for $k = 0$. Then, suppose that $f_1(x^k) \leq U^k$. We compute
    \begin{align*}
        U^{k+1} = \eta_k f_1(\tilde{y}^k) + (1 - \eta_k) U^k &\geq \eta_k f_1(\tilde{y}^k) + (1 - \eta_k) f_1(x^k) \\
        &\geq f_1(\eta_k \tk{y} + (1 - \eta_k) x^k) = f_1(x^{k+1}),
    \end{align*}
    which corresponds to the inductive step.
\end{proof}
Once we have defined the sequence $\{ U^k \}_{k \in \N}$, we introduce a new figure of merit as follows
\begin{equation}\label{eq:Phi_def}
\Phi(x^k, U^k) = f_0(x^k) + U^k
\end{equation}
and we define a computable approximate value for $\Delta_k=H_k(\tilde y^k)$ as 
\begin{equation}\label{eq:def_tildeDeltak}
    \tilde{\Delta}_k = \dual{\nabla f_0(x^k)}{\tilde{y}^k - x^k} + \frac{1}{2 \alpha_k } \norm{\tilde{y}^k - x^k}^2 + f_1(\tilde{y}^k) - U^k,
\end{equation} 
which is, in practice, $H_k(\tilde y^k)$ with $f_1(x^k)$ replaced by $U^k$. 
Clearly, the approximation \eqref{eq:def_tildeDeltak} satisfies the inequality
\begin{equation} \label{eq:deltatildeineq}
    \tilde{\Delta}_k \leq H_k(\tilde y^k).
\end{equation}
Based on the previous definitions, we set the backtracking loop to compute the smallest non--negative integer $m_k$ such that
\begin{equation}\label{eq:armijo}
   \min\{\Phi (x^{k}+\delta^{m_k} d^k, \delta^{m_k}f_1(\tilde y^k)+ (1-\delta^{m_k})U^k), F(\tilde y^k) \}\leq \Phi (x^k,U^k) + \omega \delta^{m_k} \tilde \Delta_k.
\end{equation}
The well posedness of the above condition is stated in Lemma \ref{lemma:linesearch} to follow. Its proof relies on a specific choice of the tolerance parameter $\varepsilon_k$ in \eqref{eq:stop_crit_G}, which is detailed in the next section. Finally, we adopt the following criterion to select the actual steplength parameter to be used in the updating rule \eqref{eq:step_4}
\begin{equation}\label{eq:updating_complete}
    \eta_k = 
    \begin{cases}
        \delta^{m_k} & \mbox{ if } \Phi (x^{k}+\delta^{m_k} d^k, \delta^{m_k}f_1(\tilde y^k)+ (1-\delta^{m_k})U^k)< F(\tilde y^k)\\
        1 & \mbox{ otherwise}
    \end{cases}.
\end{equation}
The above choice is motivated by the fact that it implies the following two properties on the point $x^{k+1}$:
\begin{equation}\label{eq:line_search_cons}
    \Phi(x^{k+1},U^{k+1} )\leq \Phi(x^k,U^k) + \omega \delta^{m_k}\tilde\Delta_k\ \  \mbox{ and } \ \ \Phi(x^{k+1},U^{k+1} )\leq F(\tilde y^k),
\end{equation}
which are both needed in the convergence analysis performed in Section \ref{sec:convergence}. 

\subsection{Choice of the tolerance ${\varepsilon}_k$}

In this section, we explain how the tolerance parameter can be chosen in order to: (a) make the vector $d^k$ a descent direction for the merit function $\Phi$; (b) guarantee that the sufficient decrease condition \eqref{eq:armijo} is satisfied for all sufficiently large values of $m_k$. 
Instead of selecting a prefixed sequence $\{{\varepsilon}_k \}_{k \in \N}$, we borrow the ideas from \cite{Bonettini-etal-2016,Bonettini-etal-2024,Villa-etal-2013} and adopt an adaptive strategy that relates $\epsilon_k$ with $\tilde{\Delta}_k$. In fact, we formally set
\begin{equation}\nonumber
    {\varepsilon}_k = \frac{\sqrt{- \tkd{\Delta}}}{2}.
\end{equation}
The above setting must be interpreted in the sense of Algorithm \ref{algo:inner}, where the tolerance value is dynamically adjusted through iterations performed to construct $\tilde y^k$. More precisely, Algorithm \ref{algo:inner} consists of gradient descent steps applied to minimize the function $G_k$ in \eqref{eq:inner_problem}, which generate a sequence of dual vectors $\{p^{k,j}\}_{j\in\N}$ and the corresponding approximate proximal gradient points $\{\tilde y^{k,j} := D_\sigma(p^{k,j})\}_{j\in\N}$.
The inner steps are then stopped at iteration $j$ so that 
\begin{equation*}
    \norm{\nabla G_k({p^{k,j}})}^2 \leq - \frac{\tilde{\Delta}_{k,j}}{4} = - \frac{1}{4} \Big(\dual{\nabla f_0(x^k)}{\tilde{y}^{k,j} - x^k} + \frac{1}{2 \alpha_k } \norm{\tilde{y}^{k,j} - x^k}^2 + \lambda h^*(\tilde{y}^{k,j}) - U^k \Big).
\end{equation*}
To show that the above inequality is well posed, we first notice that $\norm{\nabla G_k(p^{k,j})} \underset{j\to\infty}{\longrightarrow} \norm{\nabla G_k(\hat{p}^k)} = 0$, while, if $x^k$ is not a stationary point so that $H_k(\hat y^k)<0$, by continuity we also have
\begin{equation*}
H_k(\tilde y^{k,j}) \underset{j\to\infty}{\longrightarrow} H_k (\hat{y}^k) < 0.
\end{equation*}
Since $\tilde{\Delta}_{k,j}\leq H_k(\tilde y^{k,j})$, for sufficiently large $j$, we have $\tilde \Delta_{k,j} < 0$. 
Therefore, the inner stopping rule is well defined and Algorithm \ref{algo:inner} outputs $\tilde y^k$ and the corresponding value $\tilde \Delta_k < 0$ such that $\|\nabla G(\tilde p^k)\|^2 \leq -\tilde \Delta_k/4 $. Combining this inequality with \eqref{eq:ineqH}, we obtain
\begin{equation}\label{eq:basicHYtilde}
H_k(\tk{y}) - H_k(\hat{y}^k) \leq - \frac{\mu}{2} \tkd{\Delta}.
\end{equation}
We observe that our arguments still hold if replace the inner gradient step in Algorithm \ref{algo:inner} with any other converging optimization method, e.g. accelerated or inertial gradient methods, since
\eqref{eq:basicHYtilde} depends only on the termination rule.

\subsection{PnP-IPA pseudocode}
We summarize the whole proposed algorithm in the following pseudocode. The main routine is PnP-IPA (which stands for Plug-and-Play Inexact Proximal Algorithm) and the inner routine is inexact\_den\_prox, which computes the inexact proximal update via an inner optimization method on the primal function $\hsigma$.

\begin{algorithm}[H]
\caption{PnP-IPA: Plug-and-Play Inexact Proximal Algorithm}\label{algo:pnpipa}
\begin{algorithmic}[1]
\State \textbf{Input:} $x_{\text{init}}$, $\lambda$, $\delta$, $\omega$, $\{\alpha_k \}_{k\in\mathbb{N}}$
\State Initialization: 
\State \qquad $x^0 = \Dsigma(x_{\text{init}})$
\State \qquad $U^0 = f_1(x^0) = \lambda (\dual{x^0}{x_{\text{init}}} - \hsigma(x_{\text{init}}) )$
\For{$k=0, 1, \dots$}
    \State \textbf{1. Forward Step (Gradient):}
    \State Compute $z^{k} = x^{k} - \alpha_k \nabla f_0(x^{k})$, where $\nabla f_0(x) = \nabla f_{data}(x) - \lambda x$
    \State \textbf{2. Backward Step (Inexact Prox via Inner Loop):}
    \State $\tilde{y}^{k}, f_1(\tilde{y}^{k}), \tilde{\Delta}_k =  \mathrm{inexact\_den\_prox}(z^{k}, \nabla f_0(x^{k}), U^k, x^k, \alpha_k, \lambda)$
    \State \textbf{3. Line search:}
    \State Compute $d^{k} = \tilde{y}^{k} - x^{k}$
    \State Set $\Phi_k = f_0(x^k) + U^k$
    \State \textbf{Find} the largest $\eta \in \{1, \delta, \delta^2, \dots\}$ such that
    \State \qquad $U^{k+1} = \eta f_1(\tilde{y}^k) + (1-\eta)U^k$
    \State \qquad $\min \{ \Phi(x^k + \eta d^k, U^{k+1}), f_0(\tilde{y}^k) + f_1(\tilde{y}^k) \} \le \Phi_k + \omega \eta \tilde{\Delta}_k$
    \State \textbf{if} $f_0(\tilde{y}^k) + f_1(\tilde{y}^k) \leq \Phi(x^k + \eta d^k, U^{k+1})$
    \State \qquad Set $x^{k+1} = \tilde{y}^k$, $ U^{k+1} = f_1(\tilde{y}^k)$
    \State \textbf{else}
    \State \qquad Set $x^{k+1} = x^k + \eta d^k$, $U^{k+1} = U^{k+1}$
\EndFor
\end{algorithmic}
\end{algorithm}

\begin{algorithm}[H]
\caption{inexact\_den\_prox inner routine}\label{algo:inner}
\begin{algorithmic}[1]
\State \textbf{Input:} $z^{k}, \nabla f_0(x^{k}), U^k, x^k, \alpha_k, \lambda$, $\zeta$
\State Set $p^{k,0} = \frac{z^k}{\alpha_k \lambda}$
\For{$j=0, 1, \dots$}
\State Compute $\tilde{y}^{k,j} = \Dsigma (p^{k,j})$, $f_1(\tilde{y}^{k,j}) = \lambda (\dual{p^{k,j}}{\tilde{y}^{k,j}} - \hsigma(p^{k,j}) )$
\State Compute ${\tilde \Delta}_{k,j} = \Big( \dual{\nabla f_0(x^k)}{\tilde{y}^{k,j} - x^k} + \frac{1}{2 \alpha_k } \norm{\tilde{y}^{k,j} - x^k}^2 + f_1(\tilde{y}^{k,j}) - U^k \Big)$
    \If{$\norm{\nabla G_k(p^{k,j})}^2 \leq -{\tilde \Delta}_{k,j} / 4$}
    \State break
    \Else
    \State Compute $p^{k,j+1} = p^{k,j} - \zeta\nabla G_k(p^{k,j}) = p^{k,j} - \zeta \left(p^{k,j} - \frac{1}{\alpha_k \lambda} (z^k -\tilde y^{k,j})\right)$ 
    \EndIf
\EndFor
\State \textbf{return:} $\tilde y^k\leftarrow \tilde{y}^{k,j}, f_1(\tilde y^k)\leftarrow f_1(\tilde{y}^{k,j})$, $\tilde \Delta_k\leftarrow {\tilde \Delta}_{k,j}$
\end{algorithmic}
\end{algorithm}

\section{Convergence analysis}\label{sec:convergence}

In this section, we perform the convergence analysis for Algorithm \ref{algo:pnpipa} by adopting a more general point of view. In particular, we refer to any composite optimization problem of the form 
\begin{equation}\label{eq:problem}
\min_{x\in\R^n} F(x), \ \ \mbox{ with } F(x)=f_0(x) + f_1(x),
\end{equation}
 under the following assumptions on the functions $f_0, f_1$:
\begin{description}
    \item[{[A1]}] $f_1 : \R^n \to {\R}\cup\{+\infty\}$ is a proper, lower semicontinuous, convex function.
    \item[{[A2]}] $f_0 : \R^n \to \R$ is continuously differentiable on an open set $\Omega_0 \supset \overline{\dom(f_1)}$.
    \item[{[A3]}] $f_0$ has $L_{f_0}$--Lipschitz continuous gradient on $\dom(f_1)$.
    \item[{[A4]}] $F$ is bounded from below.
\end{description}
Clearly, the specific settings in \eqref{eq:f0}--\eqref{eq:f1} satisfy [A1]--[A4]. For nonconvex nonsmooth optimization problems, the stationary points are characterized as follows.
\begin{definition}\label{def:optcond}
    We say that a point $x\in\R^n$ is stationary for $F:\R^n\rightarrow \R\cup\{+\infty\}$ if $x\in\dom(F)$ and $0\in\partial F(x)$, where $\partial F(x)$ is the (limiting) subdifferential of $F$ at $x$ in \cite[Definition 8.3]{Rockafellar-Wets-1998}.
\end{definition}

Any (local) minimum point of $F$ is a stationary point, while the converse, in general, is not true, except the case where $f_1$ is also convex.
The subdifferential of the objective function of problem \eqref{eq:problem} under assumption [A2] can be expressed as stated in the following Lemma.  
\begin{lemma}\label{lem:subcalculus_basic} \cite[Exercise 8.8c]{Rockafellar-Wets-1998}
If $F=f_0+f_1$ with $f_1$ finite at $x$ and $f_0$ continuously differentiable on a neighbourhood of $x$, then
        $$
        \partial F(x) = \nabla f_0(x)+\partial f_1(x).
        $$
\end{lemma}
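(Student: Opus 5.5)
The statement is a citation of Rockafellar--Wets, Exercise 8.8(c). A self-contained proof would go through the regular (Fréchet) subdifferential first and then pass to limits.

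\textbf{Step 1: regular subgradients.} Write $\hat\partial$ for the regular subdifferential. Fix $x$ with $f_1(x)$ finite, and let $f_0$ be $C^1$ on a neighbourhood $V$ of $x$. By the first-order expansion
\begin{equation*}
f_0(x') = f_0(x) + \dual{\nabla f_0(x)}{x'-x} + o(\norm{x'-x}),
\end{equation*}
the inequality
\begin{equation*}
F(x') \geq F(x) + \dual{v}{x'-x} + o(\norm{x'-x})
\end{equation*}
holds if and only if
\begin{equation*}
f_1(x') \geq f_1(x) + \dual{v-\nabla f_0(x)}{x'-x} + o(\norm{x'-x}).
\end{equation*}
Adding or subtracting a smooth function only moves the $o(\cdot)$ term. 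Hence $\hat\partial F(x) = \nabla f_0(x) + \hat\partial f_1(x)$, and the same identity holds at every point of $V$ where $f_1$ is finite.

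\textbf{Step 2: passing to limiting subgradients.} Let $v\in\partial F(x)$. Then there are $x^\nu\to x$ with $F(x^\nu)\to F(x)$ and $v^\nu\in\hat\partial F(x^\nu)$ with $v^\nu\to v$. By Step 1, $v^\nu-\nabla f_0(x^\nu)\in\hat\partial f_1(x^\nu)$. Continuity of $\nabla f_0$ gives $\nabla f_0(x^\nu)\to\nabla f_0(x)$. Continuity of $f_0$ gives $f_1(x^\nu) = F(x^\nu)-f_0(x^\nu)\to f_1(x)$. So $v-\nabla f_0(x)\in\partial f_1(x)$, which proves $\partial F(x)\subseteq \nabla f_0(x)+\partial f_1(x)$.

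For the reverse inclusion, write $f_1 = F + (-f_0)$ and apply the same argument with the roles of $F$ and $f_1$ swapped.

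\textbf{Main obstacle.} The only delicate point is the $f$-attentive convergence $f_1(x^\nu)\to f_1(x)$ that the definition of the limiting subdifferential requires. It is exactly the continuity of $f_0$ near $x$ that transfers this convergence between $F$ and $f_1$.

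In the paper's setting [A1] makes $f_1$ convex, so $\partial f_1$ coincides with the convex subdifferential. With $f_1=\lambda\hsigma^*$ differentiable this gives $\partial F(x)=\{\nabla f_0(x)+\nabla f_1(x)\}$, which is consistent with the characterization of stationary points used earlier.
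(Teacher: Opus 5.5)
Your proof is correct. The paper gives no argument for this lemma and simply cites Rockafellar--Wets, Exercise 8.8(c). Your two-step route is the standard proof of that exercise: first the identity $\hat\partial F = \nabla f_0 + \hat\partial f_1$ from the first-order expansion, then passage to $F$-attentive limits using continuity of $f_0$ and $\nabla f_0$, with the reverse inclusion obtained by writing $f_1 = F + (-f_0)$ near $x$. One detail is worth stating explicitly: $f_1(x^\nu)$ is finite for large $\nu$, because $v^\nu \in \hat\partial F(x^\nu)$ forces $F(x^\nu)$ to be finite and $x^\nu \in V$ eventually. That is what allows you to apply Step 1 at $x^\nu$.
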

In particular, for convex functions, the definition of subdifferential reduces as follows.
\begin{definition}\label{def:subdiff} 
    Let $f_1:\R^n\rightarrow \R\cup\{+\infty\}$ be a proper convex function. Given $x\in\R^n$, the subdifferential of $f_1$ at $x$ is the set
    \begin{equation*}
        \partial f_1(x) = \{v\in\R^n: \ f_1(y)\geq f_1(x)+\langle v, y-x\rangle, \ \forall \ y\in\R^n\}.
    \end{equation*}
\end{definition}

In the sequel, instead of focusing on Algorithm \ref{algo:pnpipa}, we will analyze the convergence from an abstract point of view, i.e., through the properties of the sequences of the iterates. Therefore, we consider any sequences $\{x^k\}_{k\in\N}$, $\{\tilde y^k\}_{k\in\N}$, $\{\hat y^k\}_{k\in\N}$, $\{U_k\}_{k\in\N}$ where $x^{k+1} = x^k+\eta_k(\tilde y^k-x^k)$, with $\eta_k\in (0,1]$ and the following holds:
\begin{description}
    \item[{[S1]}] $U^0=f_1(x^0)$ and $U^{k+1} = \eta_kf_1(\tilde y^k) + (1-\eta_k) U^k$ for $k\geq 0$;
    \item[{[S2]}] $\hat y^k = \prox_{\alpha_kf_1}(x^k-\alpha_k\nabla f_0(x^k))$ and $H_k(\tilde y^k) - H_k(\hat y^k) \leq -\frac \mu 2 \tilde\Delta_k$ for some $\mu > 0$, with $\tilde \Delta_k$ defined as in \eqref{eq:def_tildeDeltak} and $\alpha_k\in [\alpha_{\min},\alpha_{\max}]$; 
    \item[{[S3]}]$\eta_k$ is defined as in \eqref{eq:armijo}--\eqref{eq:updating_complete}, with $\Phi$ as in \eqref{eq:Phi_def}.
\end{description}
The sequences generated by Algorithm \ref{algo:pnpipa} clearly satisfy [S1]--[S3]. 
\subsection{Preliminary results and well posedness of the line--search}
Now we start our analysis by three preliminary lemmata, whose proofs can be found in Appendix \ref{sec:proofs}.

\begin{lemma}[Squared norm estimates] \label{lem:squaredNorms}
    Assume that $f_0,f_1$ satisfy [A1]--[A2] and that [S1]--[S2] hold for the sequences $\{x^{k}\}_{k \in \N}$, $\{\tilde{y}^{k}\}_{k \in \N}$, $\{\hat{y}^{k}\}_{k \in \N}$, $\{U^k\}_{k\in\N}$. Then, the following inequalities hold true:
    \begin{eqnarray} 
        \norm{\tilde{y}^k - \hat{y}^k}^2 &\leq& -\frac\mu\theta \tkd{\Delta} \label{eq:1epsilon}\\
        \norm{\hat{y}^k - x^k}^2 &\leq& - \frac{2}{\theta} \left ( 1 + \frac{\mu}{2} \right ) \tkd{\Delta}\label{eq:2epsilon}\\
        \norm{\tilde{y}^k - x^k}^2 &\leq&  - \frac 4\theta (\mu+1) \tkd{\Delta} \label{eq:3epsilon}
    \end{eqnarray}
where $\mu,\theta$ are positive parameters. 
\end{lemma}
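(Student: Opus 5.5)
The plan is to use only three ingredients: the $\theta$--strong convexity of $H_k$ (with $\theta = 1/\alpha_{\max}$, as noted in Section \ref{sec:3.2}), the fact that $\hat y^k$ minimizes $H_k$, and the inequality $\tilde\Delta_k \le H_k(\tilde y^k)$ from \eqref{eq:deltatildeineq}. The last one rests on Proposition \ref{prop:Uk}, i.e. $f_1(x^k)\le U^k$, which holds under [S1] and [A1].

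\textbf{First bound \eqref{eq:1epsilon}.} Since $\hat y^k$ minimizes the $\theta$--strongly convex function $H_k$, the quadratic growth inequality gives
\[
\frac\theta2\|y-\hat y^k\|^2\le H_k(y)-H_k(\hat y^k)\qquad\text{for all } y.
\]
Apply this with $y=\tilde y^k$ and use [S2]. This yields
\[
\|\tilde y^k-\hat y^k\|^2\le -\frac{\mu}{\theta}\tilde\Delta_k,
\]
which is the claimed bound (in fact with a factor $1/2$ to spare).

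\textbf{Second bound \eqref{eq:2epsilon}.} Apply the same growth inequality with $y=x^k$. Since $H_k(x^k)=0$, this gives $\frac\theta2\|x^k-\hat y^k\|^2\le -H_k(\hat y^k)$. Next, from [S2] we have $-H_k(\hat y^k)\le -H_k(\tilde y^k)-\frac\mu2\tilde\Delta_k$. Finally, by \eqref{eq:deltatildeineq}, $-H_k(\tilde y^k)\le -\tilde\Delta_k$. Chaining these three inequalities gives
\[
\frac\theta2\|\hat y^k-x^k\|^2\le -\Big(1+\frac\mu2\Big)\tilde\Delta_k,
\]
which is exactly \eqref{eq:2epsilon}.

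\textbf{Third bound \eqref{eq:3epsilon}.} Use $\|a+b\|^2\le 2\|a\|^2+2\|b\|^2$ with $a=\tilde y^k-\hat y^k$ and $b=\hat y^k-x^k$. Combining the two previous bounds gives the constant
\[
2\frac{\mu}{\theta}+\frac4\theta\Big(1+\frac\mu2\Big)=\frac{4\mu+4}{\theta},
\]
which matches \eqref{eq:3epsilon} exactly.

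The only delicate point is the sign bookkeeping in the second bound. One has to use $\tilde\Delta_k\le H_k(\tilde y^k)$ in the right direction; this is where Proposition \ref{prop:Uk} enters, because replacing $f_1(x^k)$ by the larger $U^k$ lowers the value. One should also note that $\tilde\Delta_k<0$, so all the right-hand sides are nonnegative. Beyond that, the argument is a routine combination of strong convexity and the triangle inequality.
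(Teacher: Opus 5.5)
Your proof is correct and matches the paper's argument step for step. It uses the quadratic growth of the $\theta$--strongly convex $H_k$ around $\hat y^k$ together with [S2] for \eqref{eq:1epsilon}, the chain $-H_k(\hat y^k)\le -H_k(\tilde y^k)-\frac\mu2\tilde\Delta_k\le -(1+\frac\mu2)\tilde\Delta_k$ via \eqref{eq:deltatildeineq} for \eqref{eq:2epsilon}, and $\|a+b\|^2\le 2\|a\|^2+2\|b\|^2$ for \eqref{eq:3epsilon}. One small correction: dividing $\frac\theta2\|\tilde y^k-\hat y^k\|^2\le -\frac\mu2\tilde\Delta_k$ by $\theta/2$ gives exactly $-\frac\mu\theta\tilde\Delta_k$, so there is no factor $1/2$ to spare in \eqref{eq:1epsilon}; your arithmetic for the third bound already uses this exact constant.
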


\begin{lemma} \label{lem: inequalities}
Suppose Assumptions [A1]--[A3] hold true and that [S1]--[S2] hold for the sequences $\{x^{k}\}_{k \in \N}$, $\{\tilde{y}^{k}\}_{k \in \N}$, $\{\hat{y}^{k}\}_{k \in \N}$, $\{U^k\}_{k\in\N}$. Then, there exists $c,d>0$ depending only on $L_{f_0}, \theta, \mu, \alpha_{\min}$, such that
\begin{align}
    F(\hat{y}^k) &\geq F(\tilde{y}^k) + c \tkd{\Delta}, \label{eq:firstLem2} \\
    F(\hat{y}^k) &\leq F(x^k) - d \tkd{\Delta}. \label{eq:secondLem2}
\end{align}
\end{lemma}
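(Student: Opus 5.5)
We need to prove Lemma \ref{lem: inequalities}: (\ref{eq:firstLem2}) $F(\hat y^k)\ge F(\tilde y^k)+c\tilde\Delta_k$ and (\ref{eq:secondLem2}) $F(\hat y^k)\le F(x^k)-d\tilde\Delta_k$.

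The plan is to express everything through $H_k$, since $F$ and $H_k$ differ only by the linearization error of $f_0$. By definition \eqref{eq:Hkfirstdef}, for any $y$,
\[
F(y)-F(x^k) = H_k(y) + \Big(f_0(y)-f_0(x^k)-\dual{\nabla f_0(x^k)}{y-x^k}\Big) - \frac{1}{2\alpha_k}\norm{y-x^k}^2 .
\]
By [A3] and the descent lemma, the bracket lies in $[-\frac{L_{f_0}}{2}\norm{y-x^k}^2,\frac{L_{f_0}}{2}\norm{y-x^k}^2]$. This two-sided bound, together with the squared norm estimates of Lemma \ref{lem:squaredNorms}, does all the work.

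\emph{Second inequality.} Take $y=\hat y^k$. Then
\[
F(\hat y^k)-F(x^k)\le H_k(\hat y^k)+\frac{L_{f_0}}{2}\norm{\hat y^k-x^k}^2 ,
\]
where the $-\frac{1}{2\alpha_k}$ term has been dropped. For the first term, [S2] gives $H_k(\hat y^k)\le H_k(\tilde y^k)+\frac\mu2\tilde\Delta_k\le H_k(\tilde y^k)$, since $\tilde\Delta_k<0$. We cannot bound $H_k(\tilde y^k)$ directly by $\tilde\Delta_k$, because \eqref{eq:deltatildeineq} goes the wrong way. However, $H_k(\hat y^k)\le 0$, so it suffices to bound $\frac{L_{f_0}}{2}\norm{\hat y^k-x^k}^2$ via \eqref{eq:2epsilon}. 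This yields
\[
F(\hat y^k)\le F(x^k)-\frac{L_{f_0}}{\theta}\Big(1+\frac\mu2\Big)\tilde\Delta_k ,
\]
so we may take $d=\frac{L_{f_0}}{\theta}(1+\frac\mu2)$, or any larger constant.

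\emph{First inequality.} Compare $\hat y^k$ and $\tilde y^k$ directly:
\[
F(\tilde y^k)-F(\hat y^k) = \big[f_1(\tilde y^k)-f_1(\hat y^k)\big]+\big[f_0(\tilde y^k)-f_0(\hat y^k)\big].
\]
Using the definition of $H_k$, the $f_1$ difference equals
\[
H_k(\tilde y^k)-H_k(\hat y^k)-\dual{\nabla f_0(x^k)}{\tilde y^k-\hat y^k}-\frac{1}{2\alpha_k}\big(\norm{\tilde y^k-x^k}^2-\norm{\hat y^k-x^k}^2\big).
\]
For the $f_0$ difference, the descent lemma at $\hat y^k$ gives
\[
f_0(\tilde y^k)-f_0(\hat y^k)\le \dual{\nabla f_0(\hat y^k)}{\tilde y^k-\hat y^k}+\frac{L_{f_0}}{2}\norm{\tilde y^k-\hat y^k}^2 .
\]
Adding the two, the gradient terms combine into $\dual{\nabla f_0(\hat y^k)-\nabla f_0(x^k)}{\tilde y^k-\hat y^k}$. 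By Lipschitz continuity and Young's inequality this is at most
\[
\frac{L_{f_0}}{2}\big(\norm{\hat y^k-x^k}^2+\norm{\tilde y^k-\hat y^k}^2\big).
\]
The term $H_k(\tilde y^k)-H_k(\hat y^k)$ is bounded by $-\frac\mu2\tilde\Delta_k$ via [S2]. The difference of squared distances is bounded in absolute value by $\frac{1}{2\alpha_{\min}}(\norm{\tilde y^k-x^k}^2+\norm{\hat y^k-x^k}^2)$.

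Finally, apply \eqref{eq:1epsilon}--\eqref{eq:3epsilon} to each squared norm. Every piece then becomes a positive multiple of $-\tilde\Delta_k$, with constants depending only on $L_{f_0},\theta,\mu,\alpha_{\min}$. This gives $F(\tilde y^k)\le F(\hat y^k)-c\tilde\Delta_k$, which is exactly \eqref{eq:firstLem2}.

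The main obstacle is bookkeeping in the first inequality. One must make sure no term requires an upper bound on $H_k(\tilde y^k)$ itself, only on the differences $H_k(\tilde y^k)-H_k(\hat y^k)$, and that the non-quadratic gradient term is absorbed via Young's inequality rather than left linear in $\norm{\tilde y^k-\hat y^k}$. A linear term would only scale like $\sqrt{-\tilde\Delta_k}$ and would break the estimate.
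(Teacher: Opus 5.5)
Your proof is correct and follows the paper's argument essentially step for step. For the first inequality you, like the paper, combine the descent lemma at $\hat y^k$ with [S2] for the $f_1$ difference and then bound the cross term $\langle \nabla f_0(\hat y^k)-\nabla f_0(x^k),\tilde y^k-\hat y^k\rangle$. You use Young's inequality where the paper multiplies the two square-root bounds from Lemma~\ref{lem:squaredNorms}, which is equivalent. For the second inequality you use $H_k(\hat y^k)\le 0$ with \eqref{eq:2epsilon}, exactly as the paper does, and obtain the same $d$.

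There is one slip in an aside you then discard. [S2] gives $H_k(\hat y^k)\ge H_k(\tilde y^k)+\frac{\mu}{2}\tilde\Delta_k$, not $\le$. The conclusion $H_k(\hat y^k)\le H_k(\tilde y^k)$ you draw from it still holds, by minimality of $\hat y^k$.
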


\begin{lemma} \label{lem:subgrad_bound}
 Assume that $f_0,f_1$ satisfy [A1]--[A3] and that [S1]--[S2] hold for the sequences $\{x^{k}\}_{k \in \N}$, $\{\tilde{y}^{k}\}_{k \in \N}$, $\{\hat{y}^{k}\}_{k \in \N}$, $\{U^k\}_{k\in\N}$. Then, the vectors $\hat{w}^{k} =  -\nabla f_0(x^k) -\frac{1}{\alpha_k}(\hat y^k-x^k)$, $ \hat v^k = \hat w^k + \nabla f_0(\hat y^k)$ satisfy $\hat w^k \in  \partial f_1(\hat y^k) $ and 
\begin{equation} \label{eq:subgrad_ineq}
\hat{v}^{k} \in \partial F(\hat{y}^{k})\ \ \mbox{ and }\ \ \norm{\hat{v}^{k}} \leq q \sqrt{- \tkd{\Delta}},
\end{equation}
where the constant $q$ depends only on $L_{f_0}, \theta, \mu, \alpha_{\min}$.
\end{lemma}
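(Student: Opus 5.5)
The plan is to obtain the subgradient inclusion from the optimality condition of the proximal subproblem, and the norm bound by combining the Lipschitz continuity of $\nabla f_0$ with the squared norm estimates of Lemma \ref{lem:squaredNorms}.

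\emph{Inclusion.} By [S2], $\hat y^k = \prox_{\alpha_k f_1}(x^k-\alpha_k\nabla f_0(x^k))$, i.e., $\hat y^k$ minimizes the strongly convex function $H_k$ in \eqref{eq:Hkfirstdef}. Since $f_1$ is convex by [A1], Fermat's rule together with the sum rule for a convex function plus a smooth quadratic gives
\begin{equation*}
0\in \nabla f_0(x^k)+\tfrac{1}{\alpha_k}(\hat y^k-x^k)+\partial f_1(\hat y^k).
\end{equation*}
Hence $\hat w^k\in\partial f_1(\hat y^k)$ in the sense of Definition \ref{def:subdiff}. In particular $\hat y^k\in\dom(f_1)$, so $f_1$ is finite at $\hat y^k$. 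By [A2], $f_0$ is $C^1$ on the open set $\Omega_0\supset\overline{\dom(f_1)}$, which contains a neighbourhood of $\hat y^k$. Lemma \ref{lem:subcalculus_basic} then yields $\partial F(\hat y^k)=\nabla f_0(\hat y^k)+\partial f_1(\hat y^k)\ni \hat w^k+\nabla f_0(\hat y^k)=\hat v^k$.

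\emph{Norm bound.} We write
\begin{equation*}
\hat v^k = \bigl(\nabla f_0(\hat y^k)-\nabla f_0(x^k)\bigr) - \tfrac{1}{\alpha_k}(\hat y^k-x^k).
\end{equation*}
Both $x^k$ and $\hat y^k$ lie in $\dom(f_1)$, so [A3] applies. The membership of $x^k$ needs a short check. The initial point $x^0$ lies in $\dom(f_1)$ since $U^0=f_1(x^0)$ is finite. Each $\tilde y^k$ has $f_1(\tilde y^k)$ finite, since it appears in [S1] and in $\tilde\Delta_k$. Convexity of $\dom(f_1)$ and the update $x^{k+1}=x^k+\eta_k(\tilde y^k-x^k)$ then keep every $x^k$ in $\dom(f_1)$ by induction. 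The triangle inequality and $\alpha_k\ge\alpha_{\min}$ give
\begin{equation*}
\|\hat v^k\|\le \Bigl(L_{f_0}+\tfrac{1}{\alpha_{\min}}\Bigr)\|\hat y^k-x^k\|.
\end{equation*}
Inserting \eqref{eq:2epsilon} from Lemma \ref{lem:squaredNorms}, namely $\|\hat y^k-x^k\|\le\sqrt{\tfrac{2}{\theta}(1+\tfrac{\mu}{2})}\sqrt{-\tilde\Delta_k}$, gives \eqref{eq:subgrad_ineq} with
\begin{equation*}
q=\Bigl(L_{f_0}+\tfrac{1}{\alpha_{\min}}\Bigr)\sqrt{\tfrac{2+\mu}{\theta}}.
\end{equation*}
This constant depends only on $L_{f_0},\theta,\mu,\alpha_{\min}$, as claimed.

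\emph{Delicate points.} The main technical care is in justifying the subdifferential calculus. One must use the convex sum rule for the prox optimality, whose smooth part is everywhere finite, and the nonconvex rule of Lemma \ref{lem:subcalculus_basic}, which requires $\hat y^k$ to lie in the open set $\Omega_0$. One must also place $x^k$ in $\dom(f_1)$ so that the Lipschitz bound [A3] is legitimately applied. Both points follow from the finiteness of $f_1$ along the generated sequences, as shown above. The estimate itself is then immediate from Lemma \ref{lem:squaredNorms}.
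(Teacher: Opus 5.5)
Your proposal is correct and follows the paper's proof essentially verbatim: the inclusion comes from the optimality condition for $\hat y^k$ as minimizer of $H_k$, and the bound from the triangle inequality, the Lipschitz continuity of $\nabla f_0$ and \eqref{eq:2epsilon}, which yields the same constant $q$ since $\frac{2}{\theta}\left(1+\frac{\mu}{2}\right)=\frac{2+\mu}{\theta}$. Your extra checks that $x^k\in\dom(f_1)$ and that $\hat y^k$ lies in $\Omega_0$ are details the paper leaves implicit.
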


Next, we show that the line search procedure is well-defined and that the parameter $\eta_k$ computed at each step is bounded from below by a positive constant depending only on $L_{f_0}, \theta, \mu$.

\begin{lemma}[Finite termination of the line search] \label{lemma:linesearch}
Assume that $f_0,f_1$ satisfy [A1]--[A3] and  that [S1]--[S3] hold for the sequences $\{x^{k}\}_{k \in \N}$, $\{\tilde{y}^{k}\}_{k \in \N}$, $\{\hat{y}^{k}\}_{k \in \N}$, $\{U^k\}_{k\in\N}$.
Then
\begin{enumerate}
    \item The inequality \eqref{eq:armijo} is well-posed.
    \item There exists $\eta_{\min} > 0$ depending only on $L_{f_0}, \theta, \mu$, such that the parameter $\eta_k$ computed with \eqref{eq:armijo}--\eqref{eq:updating_complete} satisfies
    \begin{equation} \label{eq:eta_min}
    \eta_k \geq \eta_{\min}, \quad \forall k \geq 0.
    \end{equation}
\end{enumerate}
\end{lemma}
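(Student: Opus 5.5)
The plan is to show directly that the first term in the minimum in \eqref{eq:armijo}, namely the merit-function term, already satisfies the inequality for every $\eta=\delta^{m}\le \bar\eta$. Here $\bar\eta>0$ is an explicit constant depending only on $L_{f_0},\theta,\mu$. This gives both claims at once: the backtracking stops after finitely many reductions, and it stops at some $\delta^{m_k}\ge \delta\bar\eta$. Since the selection rule \eqref{eq:updating_complete} either keeps $\delta^{m_k}$ or sets $\eta_k=1$, we can take $\eta_{\min}=\min\{1,\delta\bar\eta\}$. We may assume $\tilde\Delta_k<0$, which is guaranteed by the inner stopping rule. If instead $x^k$ is stationary, we set $\eta_k=1$ and there is nothing to prove.

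\textbf{Step 1: expand the merit term.} For $\eta\in(0,1]$ write $x_\eta=x^k+\eta d^k$ and $U_\eta=\eta f_1(\tilde y^k)+(1-\eta)U^k$. The Descent Lemma for $f_0$ (assumption [A3]) gives
\begin{equation*}
\Phi(x_\eta,U_\eta)\le f_0(x^k)+\eta\dual{\nabla f_0(x^k)}{d^k}+\frac{L_{f_0}\eta^2}{2}\norm{d^k}^2+U^k+\eta\big(f_1(\tilde y^k)-U^k\big).
\end{equation*}
By the definition \eqref{eq:def_tildeDeltak}, the linear and $U$-terms recombine exactly into $\eta\tilde\Delta_k-\frac{\eta}{2\alpha_k}\norm{d^k}^2$. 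Using $\alpha_k\le\alpha_{\max}$, i.e. $\frac{1}{\alpha_k}\ge\theta$, we obtain
\begin{equation*}
\Phi(x_\eta,U_\eta)\le \Phi(x^k,U^k)+\eta\tilde\Delta_k+\frac{\eta}{2}\big(L_{f_0}\eta-\theta\big)\norm{d^k}^2 .
\end{equation*}
This mirrors the standard argument for exact-value line searches. The crucial point is that the surrogate $U$ is updated linearly in $\eta$, so no convexity inequality on $f_1$ is needed here.

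\textbf{Step 2: bound the quadratic remainder by $\tilde\Delta_k$.} If $L_{f_0}\eta\le\theta$ the last term is nonpositive and we get $\Phi(x_\eta,U_\eta)\le\Phi(x^k,U^k)+\eta\tilde\Delta_k$. This is stronger than required, since $\omega<1$ and $\tilde\Delta_k<0$. To get a bound valid in general, we use \eqref{eq:3epsilon} from Lemma \ref{lem:squaredNorms}, namely $\norm{d^k}^2\le-\frac4\theta(\mu+1)\tilde\Delta_k$. This yields
\begin{equation*}
\Phi(x_\eta,U_\eta)\le \Phi(x^k,U^k)+\eta\tilde\Delta_k\Big(1-\frac{2(\mu+1)}{\theta}\max\{L_{f_0}\eta-\theta,0\}\Big).
\end{equation*}
The bracket is at least $\omega$ as soon as $\eta\le\bar\eta:=\min\big\{1,\frac{\theta}{L_{f_0}}+\frac{(1-\omega)\theta}{2(\mu+1)L_{f_0}}\big\}$. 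Then the first entry of the min in \eqref{eq:armijo} satisfies the inequality, and so does the min itself. This proves well-posedness (item 1).

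\textbf{Step 3: lower bound.} Since $\delta^{m_k}$ is the first power for which the test succeeds, either $m_k=0$ or $\delta^{m_k-1}>\bar\eta$, so $\delta^{m_k}\ge\delta\bar\eta$. The rule \eqref{eq:updating_complete} returns either $\delta^{m_k}$ or $1$, which gives \eqref{eq:eta_min}. The only delicate point I expect is bookkeeping. First, $\eta_{\min}$ also depends on the fixed algorithmic constants $\delta,\omega$, which are implicitly allowed. Second, the choice of $\theta$ must be consistent with that used in \eqref{eq:3epsilon}, i.e. $\theta=1/\alpha_{\max}$. Everything else is routine.
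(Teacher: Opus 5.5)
Your proof is correct and follows the paper's argument. You apply the Descent Lemma to $f_0$, use the linear update of $U$ to recombine the linear terms exactly into $\eta\tilde\Delta_k-\frac{\eta}{2\alpha_k}\norm{d^k}^2$, absorb the quadratic remainder into $\tilde\Delta_k$ via \eqref{eq:3epsilon}, and finish with the usual backtracking count. The only difference is that you keep the term $-\frac{\eta}{2\alpha_k}\norm{d^k}^2$ (using $1/\alpha_k\ge\theta=1/\alpha_{\max}$), which the paper simply discards. This gives a slightly larger threshold $\bar\eta$ than the paper's $(1-\omega)/\kappa$ with $\kappa=2L_{f_0}(\mu+1)/\theta$, without changing the structure of the proof.
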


\begin{proof}
Let $\eta \in (0, 1]$ and $d^k = \tilde y^k - x^k$. Then
\begin{align*}
f_0(x^{k} + \eta d^k)  & + (1 - \eta) U^k + \eta f_1(\tilde{y}^{k})
\leq f_0(x^{k}) + \eta \inner{\nabla f_0(x^{k})}{d^{k}} + L_{f_0} \frac{\eta^2}{2} \norm{d^{k}}^2 + (1 - \eta) U^k + \eta f_1(\tilde{y}^{k}) \\
&= f_0(x^{k}) + U^k + \eta \left( f_1(\tilde{y}^{k}) - U^k + \inv{\alpha_k} \inner{\alpha_k \nabla f_0(x^{k})}{d^{k}} \right) + L_{f_0} \frac{\eta^2}{2} \norm{d^{k}}^2 \\
&= f_0(x^{k}) + U^k + \eta \Big(  \tilde{\Delta}_k - \inv{2 \alpha_k} \norm{d^k}^2 \Big) + L_{f_0} \frac{\eta^2}{2} \norm{d^{k}}^2 \\
&\leq f_0(x^{k}) + U^k + \eta  \tilde{\Delta}_k + L_{f_0} \frac{\eta^2}{2} \norm{\tk{y} - x^k}^2 \\
&\leq f_0(x^{k}) + U^k + \eta \tilde{\Delta}_k  - \eta^2 L_{f_0} \left( \frac 2 \theta (\mu+1) \right) \tkd{\Delta},
\end{align*}
where the first inequality follows from the Descent Lemma and the third one from \eqref{eq:3epsilon}. Setting $\kappa = L_{f_0} \left( \frac 2 \theta (\mu+1) \right)$, the last inequality writes as
\begin{equation*}
\Phi(x^k+\eta d^k, (1 - \eta) U^k + \eta f_1(\tilde{y}^{k}))\leq \Phi(x^{k}, U^k) + \eta (1 - \kappa \eta) \tkd{\Delta}.
\end{equation*}
Since $\omega < 1$ and $\tkd{\Delta} < 0$, we have
\begin{equation*}
    0< \eta \leq \frac{1 - \omega}{\kappa} \Longleftrightarrow \eta (1 - \kappa \eta) \tkd{\Delta} \leq \omega \eta \tkd{\Delta}
\end{equation*}
which implies
\begin{equation*}
\Phi(x^k+\eta d^k, (1 - \eta) U^k + \eta f_1(\tilde{y}^{k}))\leq \Phi(x^{k}, U^k) + \eta \omega \tkd{\Delta}.
\end{equation*}
Let us denote by $m$ the smallest integer such that $\delta^{m} \leq (1 - \omega)/\kappa$. Then, the parameter $m_k$ computed during a line search with termination condition \eqref{eq:armijo} satisfies $m_k \leq m$ and, therefore, formula \eqref{eq:updating_complete} implies $\eta_k \geq \delta^{m_k}\geq \delta^m$. Hence, \eqref{eq:eta_min} is satisfied with $\eta_{\min} = \delta^m$.
\end{proof}

Besides the well posedness of the steplength selection rule, Lemma \ref{lemma:linesearch} has further relevant theoretical consequences, stated in the following propositions, whose proof is given in the Appendix \ref{sec:proofs}.

\begin{proposition} \label{prop:H1}
Assume that $f_0,f_1$ satisfy [A1]--[A3] and  that [S1]--[S3] hold for the sequences $\{x^{k}\}_{k \in \N}$, $\{\tilde{y}^{k}\}_{k \in \N}$, $\{\hat{y}^{k}\}_{k \in \N}$, $\{U^k\}_{k\in\N}$. Then for any $k\geq 0$ we have
\begin{equation} \label{eq:Phi_desc}
\Phi(x^{k+1}, U^{k+1}) - \omega \eta_{\min} \tilde{\Delta}_k \leq \Phi(x^{k}, U^k).
\end{equation}
If, in addition, Assumption [A4] is satisfied, we also have
\begin{equation} \label{eq:limits}
0 = \lim_{k \to \infty} \norm{x^{k+1} - x^{k}} = \lim_{k \to \infty} \tilde{\Delta}_k = \lim_{k \to \infty} \norm{\tilde{y}^{k} - x^{k}}= \lim_{k \to \infty} \norm{\hat{y}^{k} - x^{k}}.
\end{equation}
\end{proposition}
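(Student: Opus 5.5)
Both parts of Proposition \ref{prop:H1} follow directly from the line--search properties \eqref{eq:line_search_cons}, Lemma \ref{lemma:linesearch}, and the squared norm estimates of Lemma \ref{lem:squaredNorms}.

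\textbf{The descent inequality \eqref{eq:Phi_desc}.} I would distinguish the two branches of \eqref{eq:updating_complete}.
\begin{itemize}
\item If $\eta_k=\delta^{m_k}$ and $\Phi(x^k+\delta^{m_k}d^k,\cdot)<F(\tilde y^k)$, then the minimum in \eqref{eq:armijo} is attained by the $\Phi$ term. Hence $\Phi(x^{k+1},U^{k+1})\le \Phi(x^k,U^k)+\omega\delta^{m_k}\tilde\Delta_k$.
\item Otherwise $\eta_k=1$, so $x^{k+1}=\tilde y^k$ and $U^{k+1}=f_1(\tilde y^k)$. Then $\Phi(x^{k+1},U^{k+1})=F(\tilde y^k)$, and \eqref{eq:armijo} again gives $F(\tilde y^k)\le\Phi(x^k,U^k)+\omega\delta^{m_k}\tilde\Delta_k$.
\end{itemize}
In both cases the decrease is governed by $\delta^{m_k}$ rather than by $\eta_k$. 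The proof of Lemma \ref{lemma:linesearch} gives $m_k\le m$, so $\delta^{m_k}\ge\delta^m=\eta_{\min}$. Since $\tilde\Delta_k<0$, this yields $\omega\delta^{m_k}\tilde\Delta_k\le\omega\eta_{\min}\tilde\Delta_k$, which is \eqref{eq:Phi_desc}. The only care needed is this point: the bound must be stated through $\delta^{m_k}$, not $\eta_k$.

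\textbf{The limits \eqref{eq:limits}.} I would sum \eqref{eq:Phi_desc} over $k=0,\dots,N$ to obtain
\[
\omega\eta_{\min}\sum_{k=0}^N(-\tilde\Delta_k)\le \Phi(x^0,U^0)-\Phi(x^{N+1},U^{N+1}).
\]
By Proposition \ref{prop:Uk}, $\Phi(x^{N+1},U^{N+1})\ge f_0(x^{N+1})+f_1(x^{N+1})=F(x^{N+1})$, and [A4] bounds this from below. The right-hand side is therefore bounded uniformly in $N$, so $\sum_k(-\tilde\Delta_k)<\infty$ and $\tilde\Delta_k\to0$.

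The remaining limits then follow from Lemma \ref{lem:squaredNorms}:
\begin{itemize}
\item \eqref{eq:3epsilon} gives $\|\tilde y^k-x^k\|\to0$;
\item \eqref{eq:2epsilon} gives $\|\hat y^k-x^k\|\to0$;
\item $\|x^{k+1}-x^k\|=\eta_k\|\tilde y^k-x^k\|\le\|\tilde y^k-x^k\|\to0$, since $\eta_k\le 1$.
\end{itemize}

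The lower bound on $\Phi$ is the step that needs checking. It relies on Proposition \ref{prop:Uk}, which is essential here because $\Phi$ is not $F$ itself. Everything else is bookkeeping.
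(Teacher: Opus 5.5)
Your proof is correct and follows the paper's argument. You get the descent inequality from the line search together with $\delta^{m_k}\ge\eta_{\min}$ from Lemma \ref{lemma:linesearch}, then telescope, then use Lemma \ref{lem:squaredNorms} and $\eta_k\le 1$. Your version is in fact more careful than the paper's: the paper asserts \eqref{eq:Phi_desc} without your case split, and it justifies the lower bound on $\Phi$ by claiming $f_0$ is bounded below, which [A4] does not state; your bound $\Phi(x^k,U^k)\ge F(x^k)$ via Proposition \ref{prop:Uk} is the correct justification, and you also derive $\|\hat y^k-x^k\|\to 0$ from \eqref{eq:2epsilon}, which the paper leaves implicit.
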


\begin{proposition} \label{prop:H2}
Assume that $f_0,f_1$ satisfy [A1]--[A4] and  that [S1]--[S3] hold for the sequences $\{x^{k}\}_{k \in \N}$, $\{\tilde{y}^{k}\}_{k \in \N}$, $\{\hat{y}^{k}\}_{k \in \N}$, $\{U^k\}_{k\in\N}$. Then for any $k\geq 0$ we have
\begin{equation} \label{eq:H2_ineq}
\Phi(x^{k+1}, U^{k+1}) \leq F(\hat{y}^{k}) +\frac 1 2 \rho_k^2 \leq \Phi(x^{k}, U^k) + r_k,
\end{equation}
where 
\begin{eqnarray}
    \rho_k &=& \sqrt{2} \sqrt{-c \tkd{\Delta}}\label{eq:rho_k}\\
    r_k&=&  \rho_k^2 / 2 - d \tkd{\Delta}\label{eq:r_k}
\end{eqnarray}
 with $c,d$ as in Lemma \ref{lem: inequalities}. Moreover, $\lim_{k \to \infty} r_k =  \lim_{k \to \infty} \rho_k=0$.
\end{proposition}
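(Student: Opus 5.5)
The plan is to prove the two inequalities in \eqref{eq:H2_ineq} separately and then deduce the limits from Proposition \ref{prop:H1}.

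\emph{First inequality.} By the second property in \eqref{eq:line_search_cons}, which follows from the selection rule \eqref{eq:updating_complete}, we have $\Phi(x^{k+1},U^{k+1})\leq F(\tilde y^k)$. Two cases arise. If $\eta_k=\delta^{m_k}$, the strict inequality in \eqref{eq:updating_complete} gives the bound directly. Otherwise $\eta_k=1$, so $x^{k+1}=\tilde y^k$ and $U^{k+1}=f_1(\tilde y^k)$ by [S1], hence $\Phi(x^{k+1},U^{k+1})=f_0(\tilde y^k)+f_1(\tilde y^k)=F(\tilde y^k)$. Next, \eqref{eq:firstLem2} in Lemma \ref{lem: inequalities} gives $F(\tilde y^k)\leq F(\hat y^k)-c\tilde\Delta_k$. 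Since $\tfrac12\rho_k^2 = -c\tilde\Delta_k$ by \eqref{eq:rho_k}, we obtain $\Phi(x^{k+1},U^{k+1})\leq F(\hat y^k)+\tfrac12\rho_k^2$. This quantity is well defined and nonnegative because $\tilde\Delta_k<0$, as guaranteed by the inner stopping rule.

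\emph{Second inequality.} Apply \eqref{eq:secondLem2}, namely $F(\hat y^k)\leq F(x^k)-d\tilde\Delta_k$. Then use Proposition \ref{prop:Uk}, which gives $F(x^k)=f_0(x^k)+f_1(x^k)\leq f_0(x^k)+U^k=\Phi(x^k,U^k)$. Adding $\tfrac12\rho_k^2$ to both sides yields $F(\hat y^k)+\tfrac12\rho_k^2\leq \Phi(x^k,U^k)+\tfrac12\rho_k^2-d\tilde\Delta_k=\Phi(x^k,U^k)+r_k$.

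\emph{Limits.} Under [A4], Proposition \ref{prop:H1} gives $\tilde\Delta_k\to0$. Since $\rho_k$ and $r_k$ are continuous in $\tilde\Delta_k$ with fixed constants $c,d$, both tend to zero.

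The only delicate point is making sure the case analysis in \eqref{eq:updating_complete} really produces $\Phi(x^{k+1},U^{k+1})\leq F(\tilde y^k)$ in both branches, in particular that $U^{k+1}$ is reset to $f_1(\tilde y^k)$ when $\eta_k=1$. This holds by [S1] with $\eta_k=1$, so no real obstacle is expected.
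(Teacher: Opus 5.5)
Your proposal is correct and matches the paper's proof step by step. Like the paper, you combine $\Phi(x^{k+1},U^{k+1})\le F(\tilde y^k)$ from \eqref{eq:line_search_cons} with \eqref{eq:firstLem2}, then use \eqref{eq:secondLem2} together with Proposition \ref{prop:Uk}, and obtain the limits from \eqref{eq:limits}. Your case check of \eqref{eq:updating_complete} only spells out what the paper asserts.

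One small refinement: in the abstract setting, $\tilde\Delta_k\le 0$ (hence $\rho_k\in\R$) already follows from [S2], since $\hat y^k$ minimizes $H_k$. You therefore do not need to appeal to the inner stopping rule.
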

\subsection{Stationarity of the limit points}
The first convergence result is stated in the following theorem.
\begin{theorem}\label{thm:first}
Assume that $f_0,f_1$ satisfy [A1]--[A4] and that [S1]--[S3] hold for the sequences $\{x^{k}\}_{k \in \N}$, $\{\tilde{y}^{k}\}_{k \in \N}$, $\{\hat{y}^{k}\}_{k \in \N}$, $\{U^k\}_{k\in\N}$. 
Then, every limit point $x^*$ of the sequence $\{x^k\}_{k\in\N}$ is stationary for the problem \eqref{eq:problem} and $\lim_{k\to\infty} \Phi(x^k,U^k) = F(x^*)$.
\end{theorem}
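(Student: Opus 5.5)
Let $x^*$ be a limit point and $\{x^{k}\}_{k\in K}$ a subsequence converging to it. The plan is to pass to the limit in the subgradient relation of Lemma \ref{lem:subgrad_bound} along $K$ and then use closedness of the limiting subdifferential. The nontrivial part is the convergence of function values, because $f_1$ is only lower semicontinuous.

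\emph{Convergence of the auxiliary points.} By Proposition \ref{prop:H1}, $\|\hat y^k-x^k\|\to0$ and $\tilde\Delta_k\to0$. Hence $\hat y^k\to x^*$ for $k\in K$. By Lemma \ref{lem:subgrad_bound}, $\hat v^k\in\partial F(\hat y^k)$ with $\|\hat v^k\|\le q\sqrt{-\tilde\Delta_k}\to0$.

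\emph{Convergence of the merit values.} By \eqref{eq:Phi_desc}, the sequence $\{\Phi(x^k,U^k)\}$ is nonincreasing. We also have $\Phi(x^k,U^k)\ge f_0(x^k)+f_1(x^k)=F(x^k)$ by Proposition \ref{prop:Uk}, and $F$ is bounded below by [A4]. Therefore $\Phi(x^k,U^k)$ converges to some $\Phi^*$.

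\emph{Identification of the limit.} We show $\Phi^*=F(x^*)$ and $F(\hat y^k)\to F(x^*)$ along $K$.
\begin{itemize}
\item Lower bound: lower semicontinuity of $f_1$ and continuity of $f_0$ give $\liminf_{k\in K}F(\hat y^k)\ge F(x^*)$ and $\Phi^*\ge\liminf F(x^k)\ge F(x^*)$.
\item Upper bound: this is the main obstacle. Since $\hat y^k=\prox_{\alpha_kf_1}(z^k)$, compare $H_k(\hat y^k)\le H_k(x^*)$, that is,
\[
f_1(\hat y^k)\le f_1(x^*)+\langle\nabla f_0(x^k),x^*-\hat y^k\rangle+\tfrac1{2\alpha_k}\big(\|x^*-x^k\|^2-\|\hat y^k-x^k\|^2\big).
\]
The inner product and the squared-norm terms vanish along $K$, since $\alpha_k\ge\alpha_{\min}$ and $\nabla f_0(x^k)$ is bounded on the convergent subsequence. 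Hence $\limsup_{k\in K}f_1(\hat y^k)\le f_1(x^*)$, and so $F(\hat y^k)\to F(x^*)$ along $K$.
\item Combining: the left inequality of \eqref{eq:H2_ineq} gives $\Phi(x^{k+1},U^{k+1})\le F(\hat y^k)+\tfrac12\rho_k^2$ with $\rho_k\to0$. Letting $k\to\infty$ in $K$ yields $\Phi^*\le F(x^*)$. Hence $\Phi^*=F(x^*)$, which proves $\lim_{k\to\infty}\Phi(x^k,U^k)=F(x^*)$.
\end{itemize}

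\emph{Stationarity.} Along $K$ we now have $\hat y^k\to x^*$, $F(\hat y^k)\to F(x^*)$, and $\hat v^k\in\partial F(\hat y^k)$ with $\hat v^k\to0$. The closedness of the limiting subdifferential under $F$-attentive convergence \cite[Definition 8.3]{Rockafellar-Wets-1998} gives $0\in\partial F(x^*)$. Moreover $x^*\in\dom(F)$ because $F(x^*)\le\Phi^*<\infty$, so $x^*$ is stationary for \eqref{eq:problem}.

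As an alternative to the closedness argument, one can pass to the limit directly in $\hat w^k\in\partial f_1(\hat y^k)$, where $\hat w^k\to-\nabla f_0(x^*)$. The convex subgradient inequality of Definition \ref{def:subdiff} is preserved in the limit, using the upper bound on $f_1(\hat y^k)$ obtained above. Lemma \ref{lem:subcalculus_basic} then gives $0\in\partial F(x^*)$.
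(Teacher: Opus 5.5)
Your proof is correct and follows essentially the same route as the paper. The steps match: monotone convergence of $\Phi(x^k,U^k)$, the lower semicontinuity bound $\Phi^*\ge F(x^*)$, an upper bound coming from the optimality of $\hat y^k$ for $H_k$, Proposition~\ref{prop:H2} to link $\Phi^*$ with $F(\hat y^k)$, and closedness of the limiting subdifferential (the paper cites Lemma 2.1 of Frankel et al.\ for this). The only difference is that the paper gets the upper bound from the subgradient inequality for $\hat w^{k_j}\in\partial f_1(\hat y^{k_j})$ at $x^*$ together with both sides of \eqref{eq:H2_ineq}, which is your ``alternative'', whereas you use the value comparison $H_k(\hat y^k)\le H_k(x^*)$ and only the left inequality.
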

\begin{proof}
Let $\{x^{k_j}\}_{j\in\N}$ a subsequence of $\{x^k\}_{k\in\N}$ converging to some $x^*\in\R^n$. Equality \eqref{eq:limits} implies that $x^*$ is a limit point also of the sequences $\{\tilde y^{k_j}\}_{j\in\N}$ and $\{\hat y^{k_j}\}_{j\in\N}$. Moreover, since the sequence $\{\Phi(x^k,U^k)\}_{k\in\N}$ is monotone decreasing and bounded from below by assumption [A4], its limit is finite, i.e., $\lim_{k\to\infty}\Phi(x^k,U^k) = \Phi^*\in\R$. Moreover, we have
\begin{eqnarray*}
    \Phi^* &=& \lim_{k\to\infty} \Phi(x^k,U^k) =   \lim_{k\to\infty} f_0(x^k) + U^k\\
    &\geq & \lim_{k\to \infty} F(x^k) =\lim_{j\to\infty} F(x^{k_j}) = f_0(x^*) + \lim_{j\to\infty} f_1(x^{k_j})\\ &\geq& f_0(x^*) + f_1(x^*) = F(x^*),
\end{eqnarray*}
where the first inequality follows from Proposition \ref{prop:Uk} and the last one from the semicontinuity of $f_1$.
Therefore we can write
 \begin{equation*}
     \Phi^*-f_0(x^*) \geq f_1(x^*).
 \end{equation*}
On the other side, Proposition \ref{prop:H2} implies that 
\begin{equation}\label{eq:stat1}
    \lim_{k\to\infty} F(\hat y^k) = \lim_{k\to\infty}\Phi(x^k,U^k) = \Phi^*.
\end{equation}
Let $\hat w^k\in \partial f_1(\hat y^k)$ be defined as in Lemma \ref{lem:subgrad_bound}. We have
   \begin{equation*}
       \lim_{j\to\infty} w^{k_j} = \lim_{j\to\infty} -\nabla f_0(\hat y^{k_j}) = -\nabla f_0(x^*),
   \end{equation*}
where we exploited  the continuity of $\nabla f_0$ and the convergence of $\{\hat y^{k_j}\}_{j\in\N}$ to $x^*$. On the other side, the subgradient $w^{k_j}$ satisfies the following inequality
   \begin{eqnarray*}
       f_1(x^*)&\geq& f_1(\hat y^{k_j}) +\langle w^{k_j}, x^* - \hat y^{k_j}\rangle\\
       &=& F(\hat y^{k_j}) +\langle w^{k_j}, x^* - \hat y^{k_j} \rangle - f_0(\hat y^{k_j}),
   \end{eqnarray*}
where the last equality is obtained by adding and subtracting  $f_0(\hat y^{k_j}) $ to the right-hand-side. Taking the limits for $j\to\infty$ on both sides, in view of \eqref{eq:stat1}, we obtain
\begin{equation*}
     \Phi^*-f_0(x^*) \leq f_1(x^*).
 \end{equation*}
Hence, we have $\Phi^* = F(x^*)$.
Finally, we observe that, from Lemma \eqref{lem:subgrad_bound}, there exists $\hat v^k\in\partial F(\hat y^k)$ such that $\lim_k\|\hat v^k\| = 0$. Therefore, we can apply Lemma 2.1 in \cite{Frankel-etal-2015} to conclude that $x^*$ is stationary.  
\end{proof}

\subsection{Convergence analysis in the Kurdyka--{\L}ojasiewicz framework}

In this section we give more insight on the convergence of the sequences associated to conditions [S1]--[S3] adopting in particular the framework of the KL inequality, which is stated in the following definition.
\begin{definition}\label{def:KL} \cite[Definition 3]{Bolte-etal-2014}
    Let $f:\R^n\rightarrow \R\cup\{+\infty\}$ be proper and lower semicontinuous. The function $f$ satisfies the KL inequality at the point $x^*\in\dom(\partial f)$ if there exist $\nu>0$, a neighbourhood $U$ of $x^*$, and a continuous concave function $\xi:[0,\nu)\rightarrow [0,+\infty) $ such that $\xi(0) = 0$, $\xi$ is $C^1$ on $(0,\nu)$, $\xi'(s)>0$ for all $s\in(0,\nu)$, and the following inequality holds
    \begin{equation}\nonumber
        \xi'(f(x)-f(x^*))\mathrm{dist}(0,\partial f(x))\geq 1,
    \end{equation}
    for all $x\in U \cap \{y\in\R^n: \ f(x^*)< f(y) < f(x^*)+\nu\}$. If $f$ satisfies the KL inequality for all $x^*\in\dom(\partial f)$, then $f$ is called a KL function.
\end{definition}
The analysis performed in this section runs under the following assumption:
\begin{description}
    \item[{[A5]}] The function $\mathcal{F} : \R^n \times \R \to \R\cup\{+\infty\}$ defined as
\begin{equation} \label{eq:merit_func}
\mathcal{F}(x, \rho) = F(x) + \frac{1}{2} \rho^2
\end{equation}
is a KL function.
\end{description}

Assumption [A5] is satisfied by a wide class of functions, including real-analytic and semi-algebraic mappings, and functions definable in an o-minimal structure. In particular, if $f_0$ and $f_1$ fall both into one of the aforementioned classes, then $\mathcal{F}$ belongs to the same class and thus satisfies the KL property. Notice that the function $f_1$ defined in \eqref{eq:f1} is real-analytic \cite[Corollary 1(\emph{v})]{Hurault-etal-2024}. This assumption is crucial to ensure the convergence of the algorithm to a stationary point, as stated in the following theorem. 
%
%

\begin{theorem} \label{thm:main}
Assume that $f_0,f_1$ satisfy [A1]--[A5] and that [S1]--[S3] hold for the sequences $\{x^{k}\}_{k \in \N}$, $\{\tilde{y}^{k}\}_{k \in \N}$, $\{\hat{y}^{k}\}_{k \in \N}$, $\{U^k\}_{k\in\N}$. Assume, in addition, that the sequence $\{x^{k}\}_{k \in \N}$ is bounded. Then, $\{x^{k}\}_{k \in \N}$ converges to a stationary point of $F$.
\end{theorem}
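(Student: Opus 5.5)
We will follow the abstract KL scheme of \cite{Bonettini-etal-2023}, applied to the merit function $\mathcal{F}(x,\rho)=F(x)+\frac12\rho^2$ in [A5]. We evaluate it along the auxiliary sequence $u^k:=(\hat y^k,\rho_k)$, with $\rho_k$ as in \eqref{eq:rho_k}. The point is that the computable quantity $\Phi(x^k,U^k)$ is squeezed between values of $\mathcal{F}$ at these auxiliary points.

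\textbf{Sandwich inequality.} Proposition \ref{prop:H2} reads
\begin{equation*}
\Phi(x^{k+1},U^{k+1})\le \mathcal{F}(\hat y^k,\rho_k)\le \Phi(x^k,U^k)+r_k,
\end{equation*}
with $r_k\le C(-\tilde\Delta_k)$. By \eqref{eq:Phi_desc}, $-\tilde\Delta_k$ is controlled by $\Phi(x^k,U^k)-\Phi(x^{k+1},U^{k+1})$.

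\textbf{Relative error (subgradient) bound.} By Lemma \ref{lem:subcalculus_basic} and Lemma \ref{lem:subgrad_bound}, the vector $(\hat v^k,\rho_k)$ lies in $\partial\mathcal{F}(\hat y^k,\rho_k)$. Its norm is bounded by $(q+\sqrt{2c})\sqrt{-\tilde\Delta_k}$.

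\textbf{Step-length bound.} Lemma \ref{lem:squaredNorms} gives $\|x^{k+1}-x^k\|\le\|\tilde y^k-x^k\|\le C'\sqrt{-\tilde\Delta_k}$. So both the step length and the subgradient norm are bounded by the square root of the decrease quantity. This is exactly the structure needed for the KL telescoping argument.

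\textbf{Limit-point set and uniformized KL.} Boundedness of $\{x^k\}$ and \eqref{eq:limits} imply that $\{\hat y^k\}$ is bounded with the same limit points, and that $\rho_k\to0$. Hence the set $\Omega$ of limit points of $\{(\hat y^k,\rho_k)\}$ is $\{(x^*,0)\}$ with $x^*$ ranging over the limit points of $\{x^k\}$. This set is compact and nonempty. By Theorem \ref{thm:first}, $\mathcal{F}$ is constant on $\Omega$, equal to $\Phi^*=\lim_k\Phi(x^k,U^k)$. Also $\mathcal{F}(\hat y^k,\rho_k)\to\Phi^*$ by \eqref{eq:stat1} and $\rho_k\to0$. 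We will then invoke the uniformized KL lemma \cite[Lemma 6]{Bolte-etal-2014} on $\Omega$. This gives a single desingularizing $\xi$ valid for all large $k$ at which $\Phi^*<\mathcal{F}(\hat y^k,\rho_k)<\Phi^*+\nu$. The degenerate case $\mathcal{F}(\hat y^k,\rho_k)=\Phi^*$ for some $k$ is handled separately: the sandwich then forces $\Phi(x^{k+1},U^{k+1})\le\Phi^*$, so $\Phi$ is eventually constant, $\tilde\Delta_k=0$ eventually, and the iterates are eventually constant.

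\textbf{Main step: finite length.} This is the obstacle, because the KL inequality is applied at $u^k$, whereas the decrease is measured on $\Phi$ at $x^k$. Let $\Phi_k:=\Phi(x^k,U^k)$ and $\mathcal{F}_k:=\mathcal{F}(\hat y^k,\rho_k)$. The KL inequality combined with the subgradient bound gives
\begin{equation*}
\xi'(\mathcal{F}_k-\Phi^*)\ge 1/\big(C''\sqrt{-\tilde\Delta_k}\big).
\end{equation*}
Since $\Phi_{k+1}\le\mathcal{F}_k$, $\xi$ is concave and increasing, and $\xi'$ is nonincreasing, we get
\begin{equation*}
\xi(\mathcal{F}_k-\Phi^*)-\xi(\mathcal{F}_{k+1}-\Phi^*)\ \ge\ \xi'(\mathcal{F}_k-\Phi^*)\,(\mathcal{F}_k-\mathcal{F}_{k+1})\ \ge\ \xi'(\mathcal{F}_k-\Phi^*)\,(\Phi_{k+1}-\Phi_{k+1}-r_{k+1}).
\end{equation*}
The $r_{k+1}$ term is awkward here. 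To avoid it, I would first try the abstract condition of \cite{Bonettini-etal-2023} in the following form:
\begin{equation*}
\Phi_{k+1}+a\,d_k^2\le\Phi_k,\qquad \Phi_{k+1}\le\mathcal{F}_k\le\Phi_k+r_k,\qquad \|\partial\mathcal{F}(u^k)\|\le b\,d_k,
\end{equation*}
with $d_k=\sqrt{-\tilde\Delta_k}$, $r_k\le c'd_k^2$, and $\|x^{k+1}-x^k\|\le c''d_k$. Their main theorem then gives $\sum_k d_k<\infty$. The core estimate behind it is obtained by applying $\xi$ evaluated at $\Phi_k-\Phi^*+r_k$ (an upper bound for $\mathcal{F}_k-\Phi^*$) and using $\Phi_k-\Phi_{k+1}\ge a d_k^2$, together with the inequality $2\sqrt{xy}\le x+y$. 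If this application fails, I would reproduce that telescoping by hand, controlling the extra $r_k$ terms through $\sum r_k\le C\sum d_k^2<\infty$.

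\textbf{Conclusion.} From $\sum_k d_k<\infty$ we get $\sum_k\|x^{k+1}-x^k\|<\infty$. So $\{x^k\}$ is Cauchy and converges to some $x^*$, which is stationary by Theorem \ref{thm:first}.
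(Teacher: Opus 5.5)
Your proposal is correct and follows the paper's proof. You verify [H1]--[H5] of the abstract theorem of \cite{Bonettini-etal-2023} with the same choices as the paper: $d_k=\sqrt{-\tilde\Delta_k}$, $s^k=U^k$, $u^k=\hat y^k$, $\rho_k$ from Proposition \ref{prop:H2}, $w^k=(\hat v^k,\rho_k)$, and $p$ from \eqref{eq:3epsilon}; you then conclude from the boundedness of $\{x^k\}$, $\rho_k\to 0$ and Theorem \ref{thm:first}. Your hand-made KL chain in the main step is unnecessary, and as written it is useless, since it only bounds the difference below by $-r_{k+1}\,\xi'(\mathcal{F}_k-\Phi^*)$. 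The abstract theorem already handles the $r_k$ terms appearing in [H2].
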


The boundedness of the sequence $\{x^{k}\}_{k \in \N}$ is a common assumption in convergence analysis of nonconvex optimization algorithms. The assumption is naturally satisfied if the objective function $F$ is coercive, i.e., $F(x) \to +\infty$ as $\norm{x} \to +\infty$.

The proof of Theorem \ref{thm:main} is given by showing that [S1]--[S3] are a special case of the abstract scheme defined in \cite[Theorem 5(iii)]{Bonettini-etal-2023}, which is restated below.

\begin{theorem}\label{thm:abstract}
Let $\mathcal{F} : \R^n \times \R^m \to \overline{\R}$ be a proper, lower semicontinuous KL function. Consider any sequence $\{(x^{k}, \rho_{k})\}_{k \in \N} \subset \R^n \times \R^m$ and assume that there exist a proper, lower semicontinuous, bounded from below function $\Phi : \R^n \times \R^q \to \overline{\R}$ and four sequences $\{u^{k}\}_{k \in \N} \subset \R^n$, $\{s^{k}\}_{k \in \N} \subset \R^q$, $\{\rho_{k}\}_{k \in \N} \subset \R^m$, $\{d_k\}_{k \in \N} \subset \R_{\geq 0}$ such that the following relations are satisfied.
\begin{description}
    \item[{[H1]}] There exists a positive real number $a$ such that
    \[ \Phi(x^{k+1}, s^{k+1}) + a d_k^2 \leq \Phi(x^{k}, s^{k}), \quad \forall k \geq 0. \]
    \item[{[H2]}] There exists a sequence of non--negative real numbers $\{r_k\}_{k \in \N}$ with $\lim_{k \to \infty} r_k = 0$ such that
    \[ \Phi(x^{k+1}, s^{k+1}) \leq \mathcal{F}(u^{k}, \rho_{k}) \leq \Phi(x^{k}, s^{k}) + r_k, \quad \forall k \geq 0. \]
    \item[{[H3]}] There exists a subgradient $w^{k} \in \partial \mathcal{F}(u^{k}, \rho_{k})$ such that
    \[ \norm{w^{k}} \leq b \sum_{i \in \mathcal{I}} \theta_i d_{k+1-i}, \quad \forall k \geq 0, \]
    where $b$ is a positive real number, $\mathcal{I} \subset \mathbb{Z}$ is a nonempty, finite index set and $\theta_i \geq 0, i \in \mathcal{I}$ with $\sum_{i \in \mathcal{I}} \theta_i = 1$ ($d_j = 0$ for $j \leq 0$).
    \item[{[H4]}] If $\{(x^{k_j}, \rho_{k_j})\}_{j \in \N}$ is a subsequence of $\{(x^{k}, \rho_{k})\}_{k \in \N}$ converging to some $(x^*, \rho^*) \in \R^n \times \R^m$, then we have
    \[ \lim_{j \to \infty} \norm{u^{k_j} - x^{k_j}} = 0, \quad \lim_{j \to \infty} \mathcal{F}(u^{k_j}, \rho_{k_j}) = \mathcal{F}(x^*, \rho^*). \]
    \item[{[H5]}] There exists a positive real number $p > 0$ such that
    \[ \norm{x^{k+1} - x^{k}} \leq p d_k, \quad \forall k \geq 0. \]
\end{description}
Moreover, assume that $\{(x^{k}, \rho_{k})\}_{k \in \N}$ is bounded and $\{\rho_{k}\}_{k \in \N}$ converges. Then, the sequence $\{(x^{k}, \rho_{k})\}_{k \in \N}$ converges to a stationary point of $\mathcal{F}$.
\end{theorem}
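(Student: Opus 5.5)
The plan is the standard Attouch--Bolte--Svaiter finite-length argument, adapted so that the KL inequality is applied at the auxiliary points $(u^k,\rho_k)$ while the descent is measured on the merit function $\Phi$. We write $\Phi_k:=\Phi(x^k,s^k)$.

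\textbf{Step 1 (preliminary limits).} By [H1], $\{\Phi_k\}$ is nonincreasing. Since $\Phi$ is bounded from below, $\Phi_k\downarrow\Phi^*\in\R$. Hence $\Phi_k\ge\Phi^*$ for all $k$. Summing [H1] gives $\sum_k d_k^2<\infty$, so $d_k\to0$, and [H5] then gives $\norm{x^{k+1}-x^k}\to0$. Combining [H2] with $r_k\to0$ yields $\mathcal{F}(u^k,\rho_k)\to\Phi^*$.

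Let $\Omega$ be the set of limit points of the bounded sequence $\{(x^k,\rho_k)\}$. Since $x^{k+1}-x^k\to0$ and $\rho_k$ converges, $\Omega$ is nonempty, compact and connected. By [H4], $\mathcal{F}\equiv\Phi^*$ on $\Omega$. We also need $\mathrm{dist}((u^k,\rho_k),\Omega)\to0$. Argue by contradiction: take a subsequence staying away from $\Omega$, extract from it a convergent sub-subsequence of $(x^k,\rho_k)$ using boundedness, and apply [H4] to get $u^{k_j}-x^{k_j}\to0$, which is a contradiction.

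With these facts, the uniformized KL lemma of Bolte--Sabach--Teboulle applies on $\Omega$. It gives $\epsilon,\nu>0$ and a single function $\xi$ such that the KL inequality holds at every point within distance $\epsilon$ of $\Omega$ whose value lies in $(\Phi^*,\Phi^*+\nu)$.

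\textbf{Step 2 (key inequality).} If $\Phi_{\bar k}=\Phi^*$ for some $\bar k$, then [H1] forces $d_k=0$ for all $k\ge\bar k$. By [H5] the sequence is eventually constant, and the only remaining task is stationarity (Step 4). Otherwise $\Phi_k>\Phi^*$ for every $k$.

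By [H2], $\mathcal{F}(u^k,\rho_k)\ge\Phi_{k+1}>\Phi^*$. For $k$ large, the point $(u^k,\rho_k)$ lies in the KL region, so
\begin{equation*}
1\le \xi'\bigl(\mathcal{F}(u^k,\rho_k)-\Phi^*\bigr)\norm{w^k}\le \xi'(\Phi_{k+1}-\Phi^*)\norm{w^k}.
\end{equation*}
The second inequality holds because $\xi'$ is nonincreasing ($\xi$ is concave) and $\mathcal{F}(u^k,\rho_k)-\Phi^*\ge\Phi_{k+1}-\Phi^*$ by the left half of [H2]. This is the delicate point: using the right half of [H2] here would introduce the error $r_k$, which cannot be absorbed. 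Only the lower bound $\mathcal{F}(u^k,\rho_k)\ge\Phi_{k+1}$ is used in this step.

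Next, concavity of $\xi$ together with [H1] at index $k+1$ and [H3] gives
\begin{equation*}
\xi(\Phi_{k+1}-\Phi^*)-\xi(\Phi_{k+2}-\Phi^*)\ \ge\ \xi'(\Phi_{k+1}-\Phi^*)\,a\,d_{k+1}^2\ \ge\ \frac{a\,d_{k+1}^2}{b\sum_{i\in\mathcal{I}}\theta_i d_{k+1-i}}.
\end{equation*}

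\textbf{Step 3 (finite length).} Set $\Delta\xi_k:=\xi(\Phi_{k+1}-\Phi^*)-\xi(\Phi_{k+2}-\Phi^*)$. The inequality of Step 2 reads $d_{k+1}^2\le \frac{b}{a}\,\Delta\xi_k\sum_{i\in\mathcal{I}}\theta_i d_{k+1-i}$. Applying $2\sqrt{AB}\le A+B$ gives
\begin{equation*}
2d_{k+1}\le \frac{b}{a}\Delta\xi_k+\sum_{i\in\mathcal{I}}\theta_i d_{k+1-i}.
\end{equation*}
Now sum over $k$. The terms $\Delta\xi_k$ telescope and are bounded by $\xi(\Phi_{k_0}-\Phi^*)$. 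Because $\mathcal{I}$ is finite and $\sum_i\theta_i=1$, the shifted sums $\sum_k\sum_i\theta_i d_{k+1-i}$ are at most $\sum_k d_k$ plus finitely many boundary terms. Subtracting this from the left-hand side leaves $\sum_k d_k<\infty$. By [H5], $\{x^k\}$ has finite length and is therefore Cauchy, so $x^k\to x^*$. Since $\rho_k\to\rho^*$ by hypothesis, $(x^k,\rho_k)\to(x^*,\rho^*)$.

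\textbf{Step 4 (stationarity).} By [H4], $u^k\to x^*$ and $\mathcal{F}(u^k,\rho_k)\to\mathcal{F}(x^*,\rho^*)$. By [H3] and $d_k\to0$, we have $w^k\to0$ with $w^k\in\partial\mathcal{F}(u^k,\rho_k)$. The limiting subdifferential is closed along sequences whose function values also converge, so $0\in\partial\mathcal{F}(x^*,\rho^*)$.

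The main obstacle is Step 2. The KL inequality lives at $(u^k,\rho_k)$, whereas the sufficient decrease is only available for $\Phi$, and these are linked solely through the two-sided inequality [H2]. The remedy is to order the arguments so that only its left half is used, and to handle separately the case where $\Phi_k$ reaches $\Phi^*$ in finitely many steps.
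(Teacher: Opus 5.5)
The paper gives no proof of its own here (the theorem is restated from \cite[Theorem 5(iii)]{Bonettini-etal-2023}). Your argument is a correct reconstruction of the standard KL finite-length proof behind that result, and it handles the essential point correctly: only the left half of [H2] enters the KL step, while the right half serves only to get $\mathcal{F}(u^k,\rho_k)\to\Phi^*$.

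One small ordering fix is needed. Before invoking the uniformized KL lemma you need $\Omega\subset\mathrm{dom}(\partial\mathcal{F})$, because a KL function in the paper's sense is only guaranteed to satisfy the inequality there. This follows from your Step 4 closedness argument run along any subsequence converging to a point of $\Omega$ (via [H3], [H4] and $d_k\to 0$), which shows every point of $\Omega$ is already stationary.
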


We will show that the sequences associated to [S1]--[S2], with a proper setting of the surrogate function $\Phi$ and all the auxiliary sequences, satisfies the assumptions [H1]--[H5], following the same convergence analysis adopted in \cite{Bonettini-etal-2024}. Then, we will conclude the proof of Theorem \ref{thm:main} by applying the aforementioned abstract convergence result. \\




\begin{proof}[Proof of Theorem \ref{thm:main}]
Set the merit functions $\Phi$, $\mathcal F$ as in \eqref{eq:Phi_def} and \eqref{eq:merit_func}, respectively.
From Proposition \ref{prop:H1} we directly obtain [H1] with $a = \omega \eta_{\min}$, $d_k = \sqrt{-\tilde\Delta_k}$ and $s^k = U^k$.

Let us set $u^k = \hat y^k$ and $\rho_k, r_k$ as in Proposition \ref{prop:H2}. In these settings, [H2] is satisfied. 
As for [H3], let $\hat v^k\in\partial F(\hat y^k)$ and $q>0$ be such that inequality \eqref{eq:subgrad_ineq} is satisfied. Setting $w^{k} = ( \hat{v}^{k} , \rho_k)^T$, since $\mathcal{F}$ is separable and, therefore, $\partial \mathcal{F}(x,\rho) = \partial F (x) \times \{ \rho \}$, we have $w^{k} \in \partial \mathcal{F}(\hat{y}^{k}, \rho_k)$. Hence, by the triangular inequality and in view also of \eqref{eq:rho_k}, we obtain
\begin{equation} \nonumber
\norm{w^{k}} \leq \norm{\hat{v}^{k}} + |\rho_k| \leq q \sqrt{-\tkd{\Delta}} + \sqrt{2} \sqrt{-c \tkd{\Delta}}.
\end{equation}
Therefore, [H3] holds with $b = q + \sqrt{2c}$,  $\mathcal{I} = \{1\}$, $\theta_1 = 1$.

Furthermore, from the proof of Theorem \ref{thm:first}, we have that $\lim_{k\to\infty}\|\hat y^k -x^k\| = 0$ and $\lim_{k\to\infty} F(\hat y^k) = \lim_{k\to\infty} F(x^k) = F(x^*)$ (see formula \eqref{eq:stat1}), for every limit point $x^*$ of $\{x^k\}_{k\in\N}$. Moreover, we have $\rho^*:=\lim_{k\to\infty} \rho_k  = 0$. Therefore, [H4] is satisfied.

To conclude, since $\norm{x^{k+1} - x^k} \leq \norm{\tk{y} - x^k}$, inequality \eqref{eq:3epsilon} entails [H5] with  $p = \sqrt{\frac{4}{\theta} (1+\mu) }$.
Then, Theorem \ref{thm:abstract} applies and guarantees that the sequence ${(x^k,\rho_k)}_{k \in \N}$ converges to a stationary point $(x^*,\rho^*)$ of $\mathcal{F}$. Notice that, since $\mathcal F$ is separable, $(x^*,\rho^*)$ is stationary for $\mathcal{F}$ if and only if $\rho^* = 0$ and $0 \in \partial F (x^*)$. Hence $x^*$ is a stationary point for $F$ and $\{x^k\}_{k \in \N}$ converges to it.
\end{proof}

\section{Numerical experiments}
\label{sec:numerical_experiments}

In this section, we evaluate the empirical performance of the proposed PnP-IPA algorithm. We focus on the deblurring inverse problem, using the 8 real-world camera shake kernels from \cite{Levin-etal-2009} as done in previous works \cite{Pesquet-etal-2021,Zhang-etal-2021}, and benchmark our approach against state-of-the-art PnP methods. Results are obtained on a subset of the first 10 $256 \times 256$ center-cropped color images of the CBSD68 dataset \cite{Martin-etal-2001}, hereafter referred to as CBSD10. In particular, our experiments are designed to demonstrate three aspects of our method:
\begin{enumerate}
    \item the effectiveness of PnP-IPA in a high Gaussian noise regime;
    \item the superiority of our algorithm over previous PnP methods in a low Gaussian noise regime, where we can leverage the flexibility of the line--search to optimally tune the regularization parameter;
    \item the convergence of PnP-IPA in a deblurring scenario with Cauchy noise, which is a heavy-tailed distribution which leads to a nonconvex data-fidelity term.
\end{enumerate}
With regards to the Gaussian noise, for which the data-fidelity term is convex, we compare the method to three aforementioned state-of-the-art PnP algorithms with convergence guarantees: Prox-PnP \cite{Hurault-etal-2022a}, relaxed-Prox-PnP and alpha-Prox-PnP \cite{Hurault-etal-2024}. Moreover, we also compare with DPIR \cite{Zhang-etal-2021}, which is a widely used PnP method based on the HQS algorithm and a DRUNet denoiser that achieves excellent visual results but lacks convergence guarantees. For the Cauchy noise, we compare with the prox-PnP, for which convergence is guaranteed even in the nonconvex setting (unlike alpha-Prox-PnP), and with a Gradient Descent method plus backtracking using the Gradient-Step prior in a RED framework, as described in \cite{Cohen-etal-2021}.
 
\subsection{Implementation details}  \label{sec:implementation_details}

Our experiments\footnote{The code to reproduce the numerical experiments is available at \url{https://github.com/crisparenti/PnP-IPA}.} are implemented in Python using the DeepInverse \cite{Tachella-etal-2025} library, which provides a modular framework for implementing algorithms for inverse problems. For a fair comparison, the Gradient-Step denoiser and the DRUNet used in the DPIR algorithm are implemented using the neural network architectures and pre-trained weights given by the library. Regarding the Prox-PnP algorithm, its relaxed version and the alpha-Prox-PnP (as well as PnP-IPA), we use the official weights provided by the authors of \cite{Hurault-etal-2022a}, which can be downloaded from their GitHub repository. The parameters of the high-noise Gaussian setting are set accordingly to the guidelines provided in the respective references, while for the low-noise Gaussian setting and for the Cauchy noise an extensive grid search has been performed for all the methods to ensure the best possible PSNR. The hyperparameters configuration for the baseline methods is given in Appendix \ref{sec:baselines_hyperparameters}. All methods are stopped when the relative error $\norm{ x^k - x^{k-1} } / \norm{ x^{k-1} }$ falls below a threshold of $10^{-4}$. Moreover, we fixed the seed to 35. All experiments ran on a machine equipped with an Intel Core i7-12700H CPU, 16 GB of RAM, and an NVIDIA GeForce RTX 4050 (Laptop) GPU with 6 GB of VRAM.

Regarding our proposed PnP-IPA algorithm, the line-search hyperparameters are set to $\delta = 0.5$ and $\omega = 10^{-4}$. The step--size $\alpha_k$, which we remind that it can be freely chosen without any theoretical constraint (apart from lower and upper bounds), is initially set to $10^{6}$ and reduced by a factor of 3 every $N_{\alpha}$ iterations until $\alpha_k \leq 10^{2}$, and then it remains constant. The rationale behind this choice is given in Section \ref{sec:IPA_remarks}. We set $N_{\alpha} = 10$ for the Gaussian-noise experiments and $N_{\alpha} = 25$ for the Cauchy-noise experiments, where a slower decrease of $\alpha_k$ was found to be beneficial. The regularization parameter $\lambda$ is tuned for each noise level and each method to achieve the best possible PSNR. 

Our results have been obtained using the BenchOpt framework \cite{Moreau-etal-2022}, which ensures a standardized evaluation protocol and fair comparison across different algorithms. In particular, the reported computation time is handled directly by the Benchopt library and corresponds to the wall-clock from solver start until the stopping criterion is met.

\begin{remark}
    In \cite{Hurault-etal-2022a, Hurault-etal-2024}, the regularization parameter is multiplied to the data fidelity term, leading to an objective functional written as
    \begin{equation} \label{eq:proxPnP_objective}
        \min_x \lambda f_{data}(x) + \phisigma(x).
    \end{equation}
    Hence it differs from our setting, where the regularization parameter is multiplied to the regularization term. However, the two formulations are equivalent and can be easily transformed one into the other by a simple change of variable. In particular, if we denote by $\lambda'$ the regularization parameter in our setting, then the corresponding regularization parameter in the formulation of \cite{Hurault-etal-2022a, Hurault-etal-2024} is given by $\lambda = 1 / \lambda'$. Then, tuning $\lambda'$ is equivalent to tuning $\lambda$. In the following, to make the comparison easier to understand, we will refer to the latter formulation \ref{eq:proxPnP_objective}.
\end{remark}

\subsection{Image deblurring under high Gaussian noise}

We first consider the standard image deblurring task under additive white Gaussian noise with standard deviation $\nu = 0.05$, which is a high level of noise. We remind, as a classical result, that the data-fidelity term related to Gaussian noise is given by the standard least--squares function $f_{data}(x) = \frac{1}{2} \norm{Ax - y}^2$, which is convex. The Lipschitz constant of its gradient is $L_{f_{data}} =  \norm{A^T A}$, equal to 1 for normalized kernels (as the ones we test the methods on). Regarding PnP-IPA, we set the regularization parameter $\lambda$ to $3$ and the denoising level $\sigma$ to $0.05 = 1 \nu$.

In this aggressive noise regime a high level of regularization is required to stabilize the optimization process; hence, the constraints on the regularization parameter imposed by Prox-PnP do not limit the performance of the algorithm, and all the methods perform well. In particular, as shown in Table \ref{tab:high_noise_table}, Prox-PnP, alpha-Prox-PnP and PnP-IPA achieve comparable PSNR values, while DPIR performs better (but without theoretical guarantees). Our method, nevertheless, is faster than Prox-PnP and its variants.

Lastly, we remark that relaxed-Prox-PnP is not included in the comparison since, in this high noise regime, the optimal value of the relaxation parameter provided in the reference paper is exactly 1, which makes the algorithm equivalent to Prox-PnP.

\begin{table}[h!]
\begin{center}
\caption{High Gaussian Noise deblurring: average PSNR (dB) and computation time (s) evaluated across 10 images and 8 blur kernels.}
\begin{tabular}{lcc}
\toprule
Method & Average PSNR (dB) & Average Time (s) \\
\midrule
DPIR & 27.78 & 0.29 \\
PnP-IPA & 27.53 & 7.47 \\
Prox-PnP & 27.50 & 10.09 \\
alpha-PnP & 27.50 & 10.05 \\
\bottomrule
\end{tabular}
\label{tab:high_noise_table}
\end{center}
\end{table}

In Figure \ref{fig:high_noise_figure}, we plot the evolution of PSNR and relative error along time, as well as the reconstructed images from an example image provided in the CBSD68 dataset.

\begin{figure}[h!]
    \centering

    \begin{subfigure}{0.48\textwidth}
        \centering
        \includegraphics[width=\linewidth]{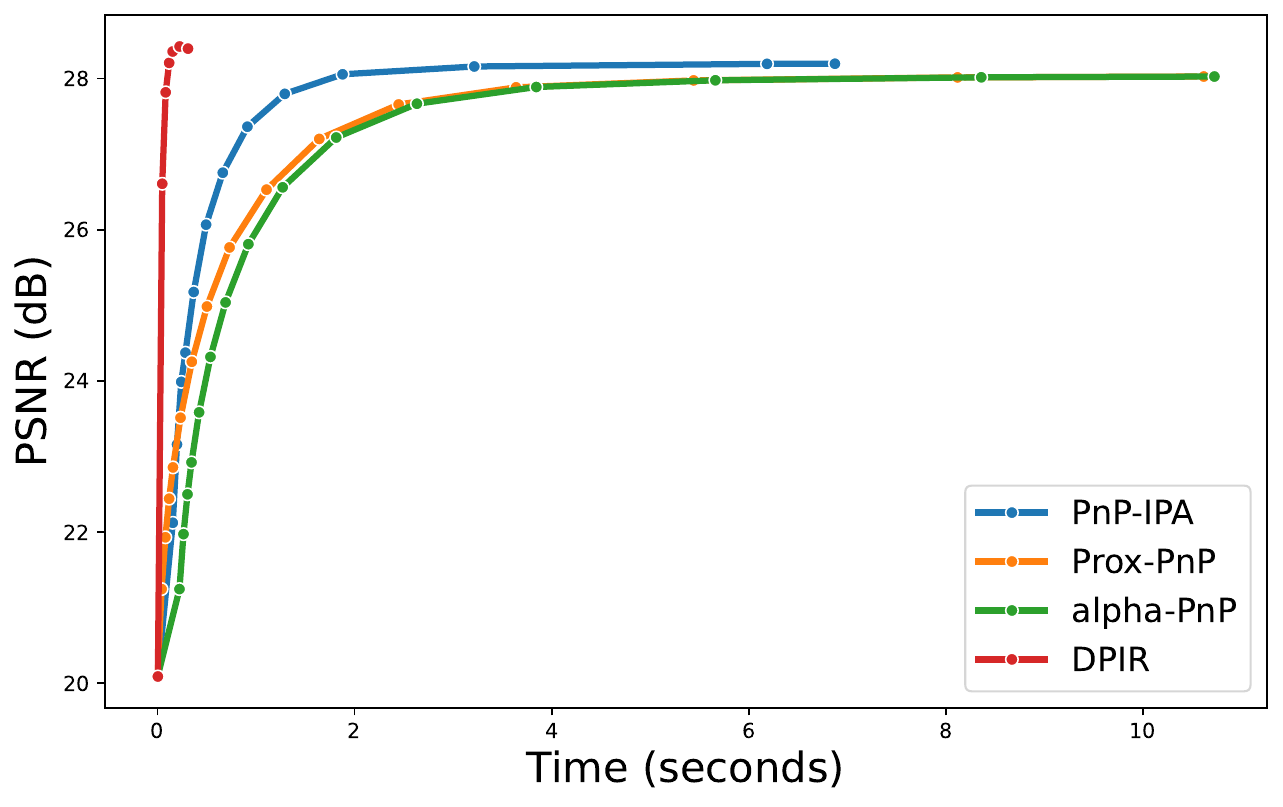}
        \caption{PSNR evolution along time.}
    \end{subfigure}\hfill
    \begin{subfigure}{0.48\textwidth}
        \centering
        \includegraphics[width=\linewidth]{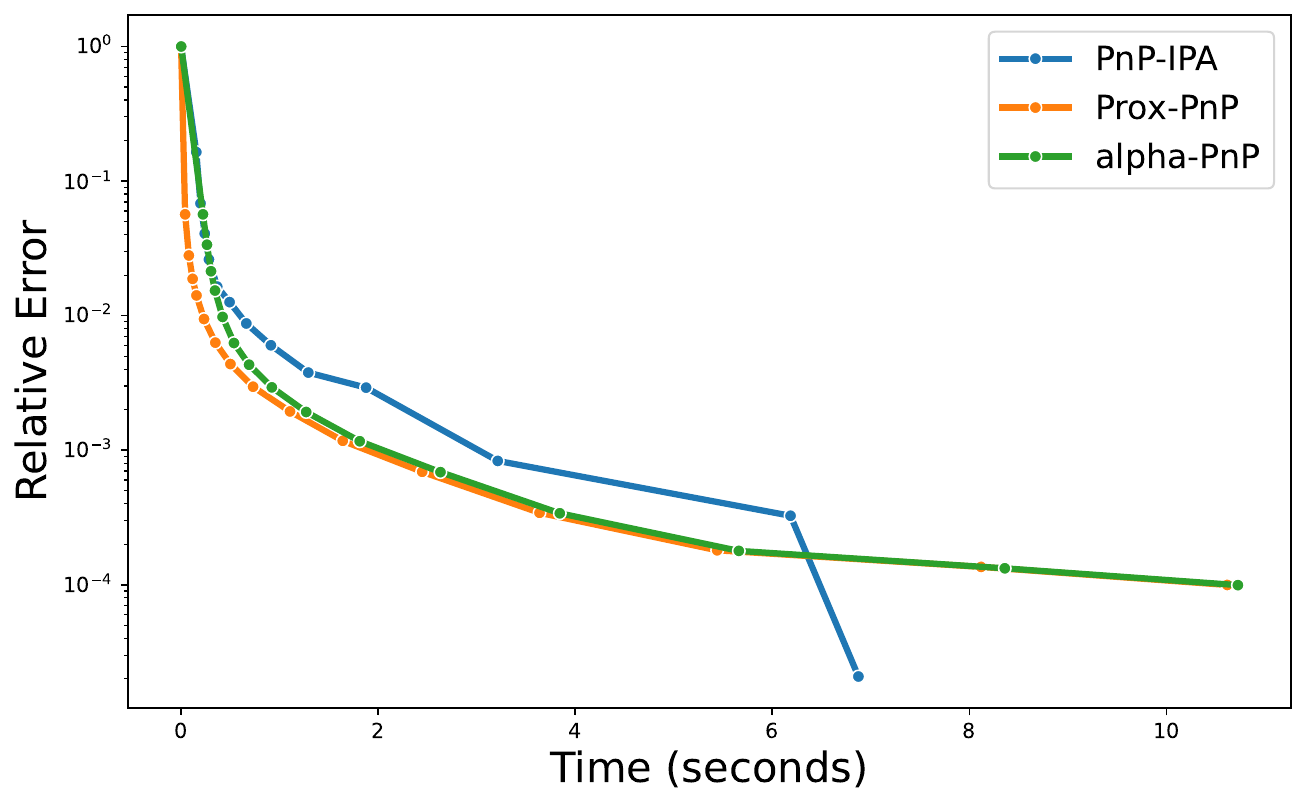}
        \caption{Relative error evolution along time.}
    \end{subfigure}
    
    \vspace{0.5cm}

    {\renewcommand{\spysize}{1.67cm}%
    \begin{subfigure}{0.32\textwidth}
        \centering
        \spyimg{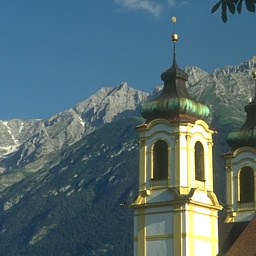}{3.48}{3.06}{1.085}{0.250}
        \caption{Clean image}
    \end{subfigure}\hfill
    \begin{subfigure}{0.32\textwidth}
        \centering
        \spyimg{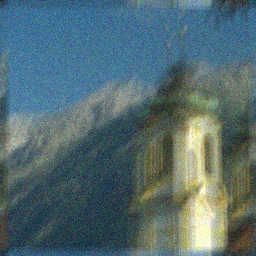}{3.48}{3.06}{1.085}{0.250}
        \caption{Blurred image}
    \end{subfigure} \hfill}
     \begin{subfigure}{0.32\textwidth}
        \centering
        \includegraphics[width=\linewidth]{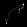}
         \vspace{0.17cm}
        \caption{Blur kernel ($4^{\text{th}}$ kernel of \cite{Levin-etal-2009})}
    \end{subfigure}\hfill
    
    \vspace{0.5cm} 
    
    \renewcommand{\spysize}{1.2cm}%
    \begin{subfigure}{0.23\textwidth}
        \centering
        \spyimg{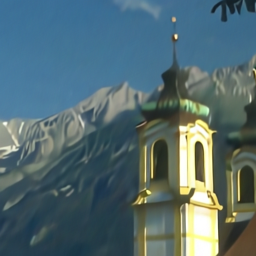}{2.5}{2.2}{0.78}{0.18}
        \caption{Reconstruction with PnP-IPA}
    \end{subfigure}\hfill
    \begin{subfigure}{0.23\textwidth}
        \centering
        \spyimg{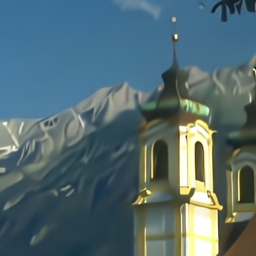}{2.5}{2.2}{0.78}{0.18}
        \caption{Reconstruction with Prox-PnP}
    \end{subfigure} \hfill
    \begin{subfigure}{0.23\textwidth}
        \centering
        \spyimg{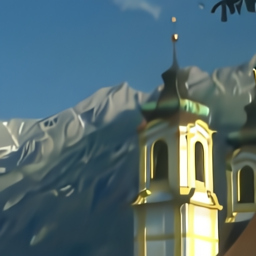}{2.5}{2.2}{0.78}{0.18}
        \caption{Reconstruction with alpha-Prox-PnP}
    \end{subfigure}\hfill
    \begin{subfigure}{0.23\textwidth}
        \centering
        \spyimg{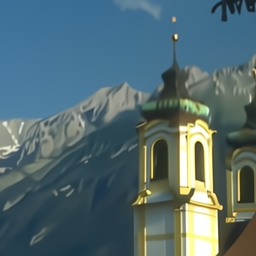}{2.5}{2.2}{0.78}{0.18}
        \caption{Reconstruction with DPIR}
    \end{subfigure}\hfill
    
    \caption{Reconstruction results and convergence comparison under aggressive Gaussian noise ($\nu = 0.05$). In this high noise regime, all methods perform well, with DPIR achieving the best PSNR but without theoretical guarantees. PnP-IPA achieves comparable PSNR to Prox-PnP and its variants, while being slightly faster. The image is taken from the CBSD68 dataset \cite{Martin-etal-2001}.}
    \label{fig:high_noise_figure}
\end{figure}

\subsection{Image deblurring under weak Gaussian noise}

We compare the same methods under weak Gaussian noise. Since in this setting we can trust more the data fidelity term, the optimal regularization parameter $\lambda$ can be higher than in the aggressive noise case. Then, the constraints on $\lambda$ imposed by Prox-PnP and its variants become relevant, as they force the algorithm to use a suboptimal $\lambda$ that is too small for this noise level. In contrast, PnP-IPA is completely agnostic to constraints. This allows us to apply less regularization (tuning $\lambda$ to its true optimal value for image quality) without compromising convergence.

PnP-IPA is run with $\lambda = 10$, which largely exceeds the upper bound on $\lambda$ imposed by Prox-PnP, and the denoising level $\sigma$ is set to $0.02 = 2 \nu$. Quantitative results are reported in Table~\ref{tab:low_noise_table}. We observe that PnP-IPA consistently outperforms Prox-PnP and its variants. In Figure \ref{fig:low_noise_figure} we show a visual comparison of the restored images.

\begin{table}[h!]
\begin{center}
\caption{Low Gaussian Noise deblurring: average PSNR (dB) and computation time (s) evaluated across 10 images and 8 blur kernels.}
\begin{tabular}{lcc}
\toprule
Method & Average PSNR (dB) & Average Time (s) \\
\midrule
DPIR & 33.54 & 0.30 \\
PnP-IPA & 33.10 & 9.81 \\
alpha-PnP & 33.02 & 6.36 \\
Prox-PnP-Relaxed & 32.61 & 7.14 \\
Prox-PnP & 32.60 & 8.74 \\
\bottomrule
\end{tabular}
\label{tab:low_noise_table}
\end{center}
\end{table}

\begin{figure}[h!]
    \centering
    
    \begin{subfigure}{0.48\textwidth}
        \centering
        \includegraphics[width=\linewidth]{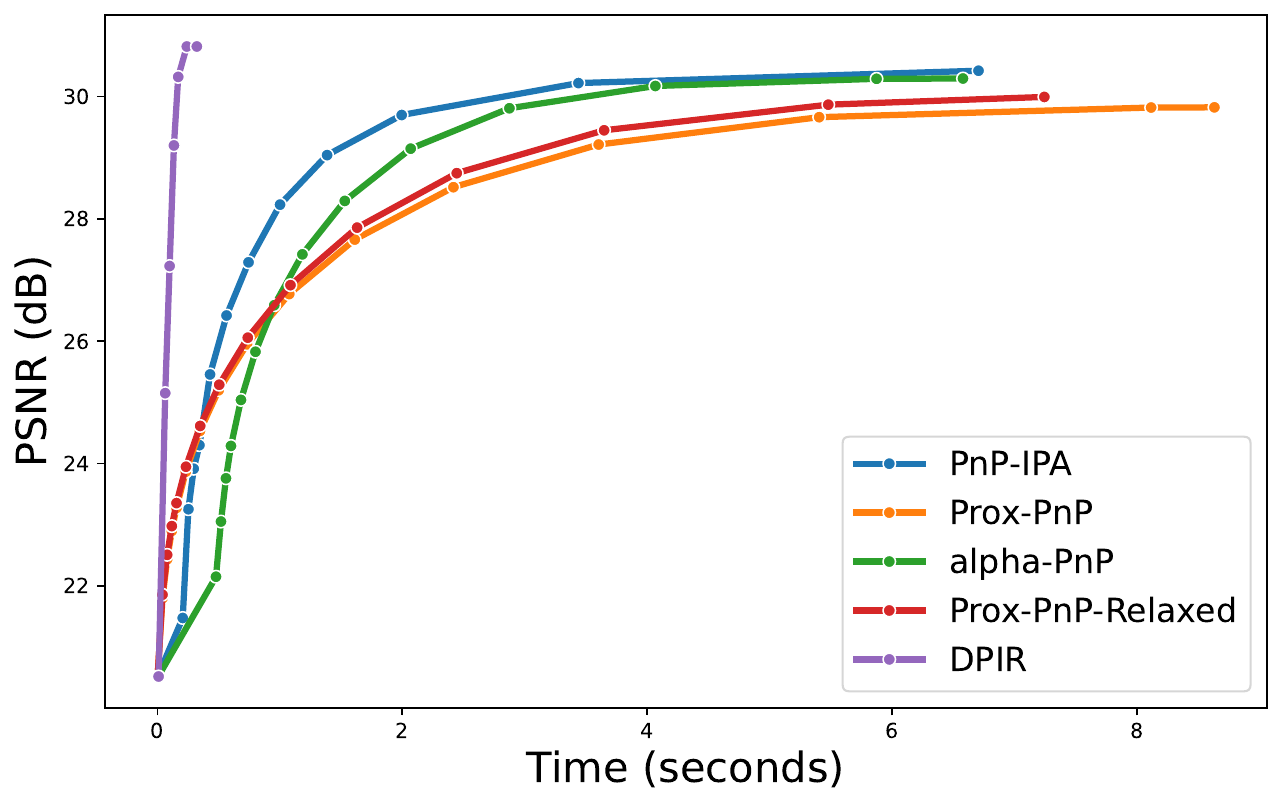}
        \caption{PSNR evolution along time.}
    \end{subfigure}\hfill
    \begin{subfigure}{0.48\textwidth}
        \centering
        \includegraphics[width=\linewidth]{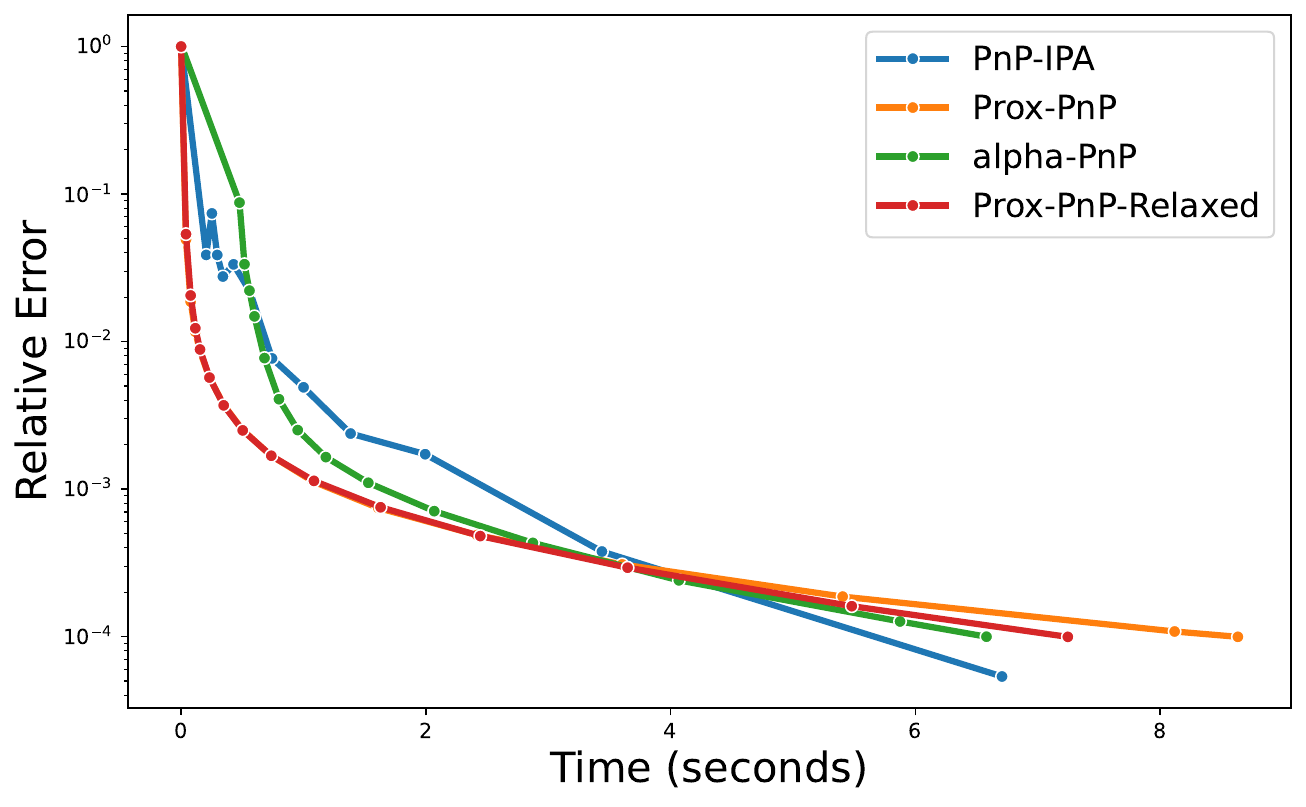}
        \caption{Relative error evolution along time.}
    \end{subfigure}
    
    \vspace{0.5cm}

    {\renewcommand{\spysize}{1.78cm}%
    \begin{subfigure}{0.32\textwidth}
        \centering
        \spyimg{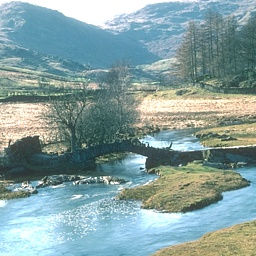}{2.40}{2.72}{1.387}{0.320}
        \caption{Clean image}
    \end{subfigure}\hfill
    \begin{subfigure}{0.32\textwidth}
        \centering
        \spyimg{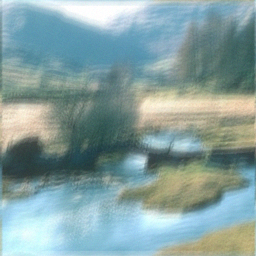}{2.40}{2.72}{1.387}{0.320}
        \caption{Blurred image}
    \end{subfigure} \hfill}
     \begin{subfigure}{0.32\textwidth}
        \centering
        \includegraphics[width=\linewidth]{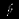}
         \vspace{0.17cm}
        \caption{Blur kernel ($1^{\text{st}}$ kernel of \cite{Levin-etal-2009})}
    \end{subfigure}\hfill
    
    \vspace{0.5cm}
    
    \renewcommand{\spysize}{1.0cm}%
    \begin{subfigure}{0.18\textwidth}
        \centering
        \spyimg{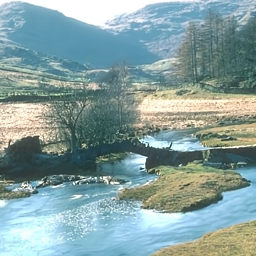}{1.35}{1.53}{0.78}{0.18}
        \caption{Reconstruction with DPIR}
    \end{subfigure}\hfill
    \begin{subfigure}{0.18\textwidth}
        \centering
        \spyimg{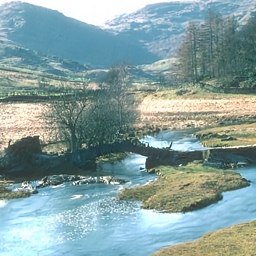}{1.35}{1.53}{0.78}{0.18}
        \caption{Reconstruction with PnP-IPA}
    \end{subfigure}\hfill
    \begin{subfigure}{0.18\textwidth}
        \centering
        \spyimg{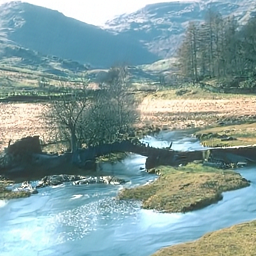}{1.35}{1.53}{0.78}{0.18}
        \caption{Reconstruction with alpha-PnP}
    \end{subfigure}\hfill
    \begin{subfigure}{0.18\textwidth}
        \centering
        \spyimg{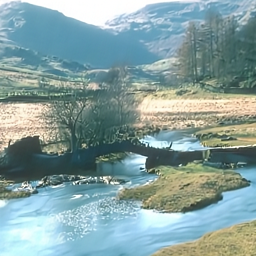}{1.35}{1.53}{0.78}{0.18}
        \caption{Reconstruction with relaxed-PnP}
    \end{subfigure} \hfill
    \begin{subfigure}{0.18\textwidth}
        \centering
        \spyimg{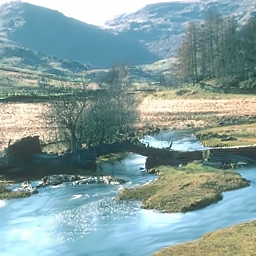}{1.35}{1.53}{0.78}{0.18}
        \caption{Reconstruction with Prox-PnP}
    \end{subfigure} \hfill

    \caption{Reconstruction results and convergence comparison under low Gaussian noise ($\nu = 0.01$). In this low noise regime, PnP-IPA outperforms Prox-PnP and its relaxed version by a significant margin, thanks to its ability to use the optimal regularization parameter without any constraint. DPIR achieves the best PSNR but without theoretical guarantees. The image is taken from the CBSD68 dataset \cite{Martin-etal-2001}.}
    \label{fig:low_noise_figure}
\end{figure}

\subsection{Deblurring under Cauchy noise}

We move beyond convex data fidelity and evaluate the algorithms on a nonconvex task: deblurring under Cauchy noise. Heavy-tailed Cauchy noise is used to model impulsive noise or outliers, and finds applications in SAR imaging \cite{Bhuiyan-etal-2007}, radar signal processing \cite{Tsihrintzis-Nikias-1997}, underwater acoustic signal processing \cite{Idan-Speyer-2010}, as well as many other domains where extreme deviations from the mean are common. It is parametrized by a scale parameter $\gamma$ that controls the tail heaviness: higher $\gamma$ corresponds to heavier tails and more extreme outliers. Its probability density function is given by
\begin{equation}
    p(x) =  \frac{1}{\pi} \frac{\gamma}{\gamma^2 + x^2}.
\end{equation}
In practice, the noisy image is clipped to [0,1] before reconstruction, a common practice for displayable images. The corresponding Maximum A Posteriori (MAP) estimation yields, up to additive and positive multiplicative constants that do not affect the minimization, the following data-fidelity term, which is nonconvex:
\begin{equation}
    f_{data}(x) = \frac{1}{2} \sum_{i=1}^m \log\left( \gamma^2 + (Ax - y)_i^2 \right).
    \label{eq:cauchy_data_fidelity}
\end{equation}
Furthermore, the Lipschitz constant of its gradient is exactly $L_{f_{data}} = \norm{A^T A} / \gamma^2$, which is extremely large for small $\gamma$ (i.e., heavy-tailed noise). For instance, with $\gamma = 0.01$, we have $L_{f_{data}} = 10^4$ for normalized kernels.

Under these conditions, the theoretical bounds for Prox-PnP and its relaxed version become extremely restrictive, since the parameter $\lambda$ should be set equal to $1.49 \cdot 10^{-4}$ to satisfy $\lambda L_{f_{data}} < 1.5$. Moreover, alpha-Prox-PnP cannot be applied, since it requires the data fidelity to be convex and the relaxed version of Prox-PnP does not significantly relax the constraints on $\lambda$, which would be set in the same order of magnitude. Furthermore, DPIR is omitted from this scenario: despite an extensive hyperparameters tuning, we observed that the algorithm consistently diverges without producing any viable intermediate result. In contrast, PnP-IPA converges without any constraint. 

Prox-PnP runs with the largest admissible proximal--gradient step--size $\tau = 1.49 / L_{f_{data}} = 1.49 \cdot 10^{-4}$ and $\sigma = 0.006$. For the Gradient Descent method, we set the initial step--size to $10^{-3}$ and let it be handled by the backtracking procedure, while the regularization parameter is set to $\lambda = 10^{-3}$ and the denoising level to $\sigma = 0.015$. The initial step size is divided by a factor $\beta =1.5$ until the descent condition is satisfied. For PnP-IPA, we set $\lambda = 1 / 300$ and $\sigma = 0.03$. These parameters have been tuned to achieve the best possible PSNR for each method after an extensive grid search.

Intuitively, the heavy tails of the Cauchy distribution result in a data-fidelity term that, locally, is steep enough to make the Lipschitz constant of its gradient extremely large; convergence of Prox-PnP therefore requires a tiny $\lambda$ (i.e., a strong regularization weight $1/\lambda$ in our splitting convention of equation \ref{eq:proxPnP_objective}). PnP-IPA, having no such constraint, can afford a regularization weight roughly one order of magnitude smaller, granting more credit to the data-fidelity term and yielding sharper reconstructions.

Table \ref{tab:cauchy_table} reports the numerical results. PnP-IPA outperforms the other methods, both in terms of PSNR and computational time. In Figure \ref{fig:cauchy_noise_figure} we report a visual comparison of the restored images, as well as the evolution of PSNR and residual along iterations.

\begin{table}[h!]
\begin{center}
\caption{Cauchy Noise deblurring: average PSNR (dB) and computation time (s) evaluated across 10 images and 8 blur kernels.}
\begin{tabular}{lcc}
\toprule
Method & Average PSNR (dB) & Average Time (s) \\
\midrule
PnP-IPA & 30.77 & 8.48 \\
GS-GD-Backtracking & 30.49 & 24.12 \\
Prox-PnP & 29.15 & 21.97 \\
\bottomrule
\end{tabular}
\label{tab:cauchy_table}
\end{center}
\end{table}

\begin{figure}[h!]
    \centering
    
    \begin{subfigure}{0.48\textwidth}
        \centering
        \includegraphics[width=\linewidth]{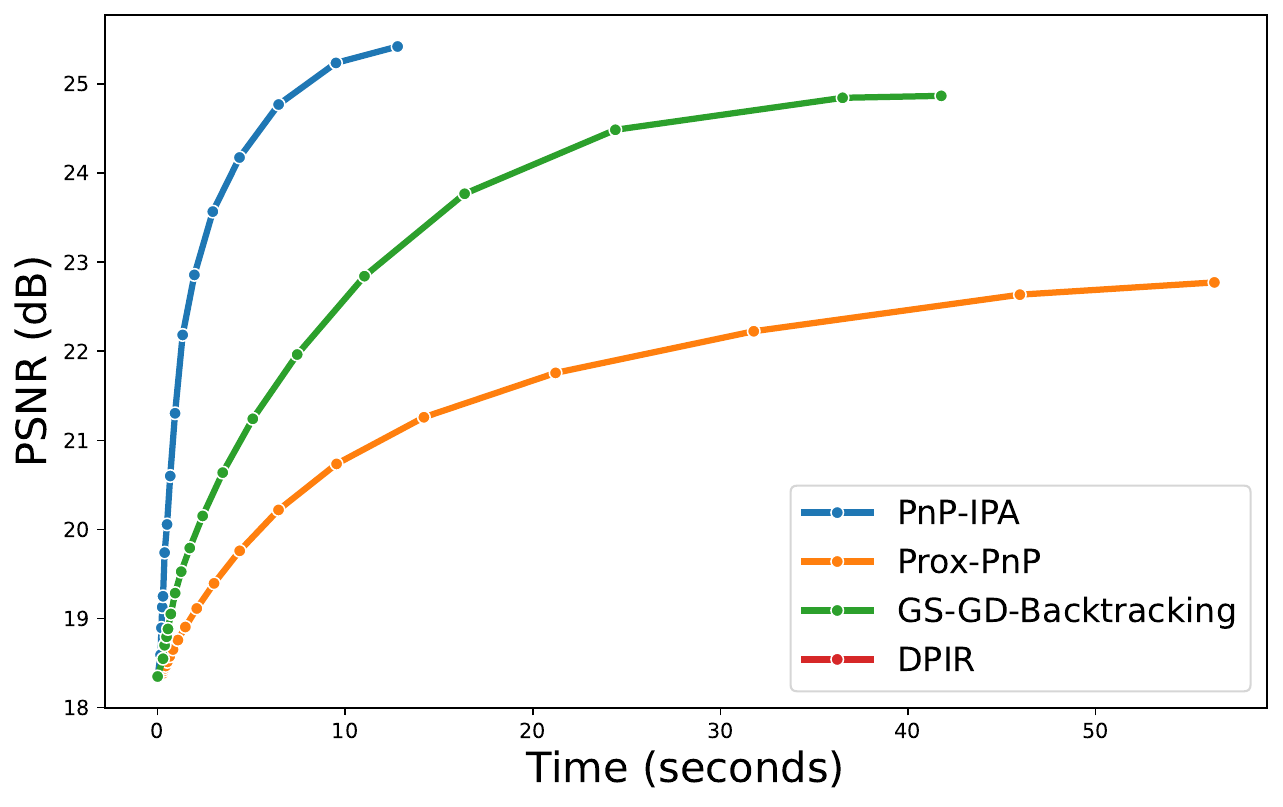}
        \caption{PSNR evolution along time.}
    \end{subfigure}\hfill
    \begin{subfigure}{0.48\textwidth}
        \centering
        \includegraphics[width=\linewidth]{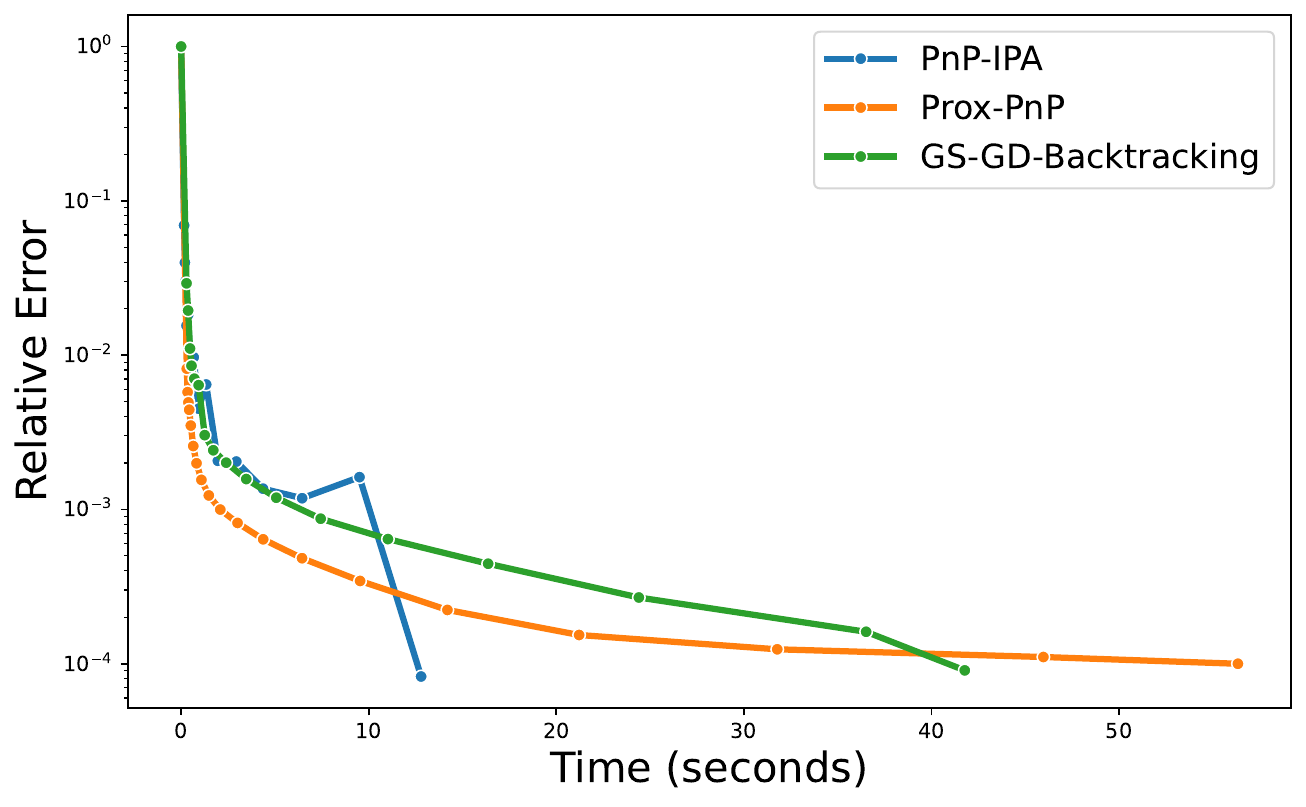}
        \caption{Relative error evolution along time.}
    \end{subfigure}
    
    \vspace{0.5cm} 
    

    {\renewcommand{\spysize}{1.6cm}%
    \begin{subfigure}{0.32\textwidth}
        \centering
        \spyimg{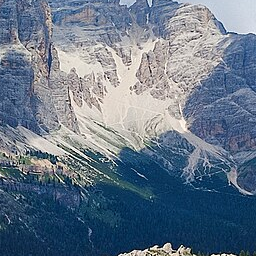}{1.80}{4.70}{0.78}{0.18}
        \caption{Clean image}
    \end{subfigure} \hfill
    \begin{subfigure}{0.32\textwidth}
        \centering
        \spyimg{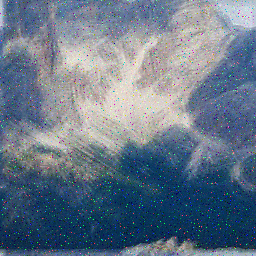}{1.80}{4.70}{0.78}{0.18}
        \caption{Blurred image}
    \end{subfigure} \hfill}
     \begin{subfigure}{0.32\textwidth}
        \centering
        \includegraphics[width=\linewidth]{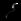}
        \vspace{0.17cm}
        \caption{Blur kernel ($6^{\text{th}}$ kernel of \cite{Levin-etal-2009})}
    \end{subfigure}

    \vspace{0.5cm}

    \renewcommand{\spysize}{1.6cm}%
    \begin{subfigure}{0.32\textwidth}
        \centering
        \spyimg{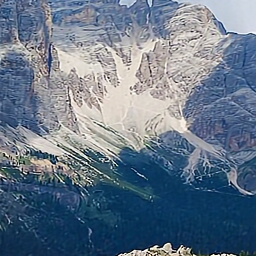}{1.80}{4.70}{0.78}{0.18}
        \caption{Reconstruction with PnP-IPA}
    \end{subfigure}\hfill
    \begin{subfigure}{0.32\textwidth}
        \centering
        \spyimg{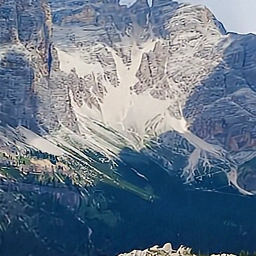}{1.80}{4.70}{0.78}{0.18}
        \caption{Recon. with GS-GD}
    \end{subfigure} \hfill
    \begin{subfigure}{0.32\textwidth}
        \centering
        \spyimg{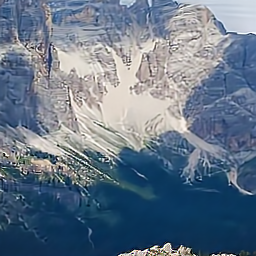}{1.80}{4.70}{0.78}{0.18}
        \caption{Recon. with Prox-PnP}
    \end{subfigure} \hfill
    \caption{Reconstruction results and convergence comparison under Cauchy noise ($\gamma = 0.01$). In this nonconvex setting with heavy-tailed noise, PnP-IPA outperforms both Prox-PnP and the Gradient Descent method with backtracking, achieving higher PSNR and faster convergence. The image is a patch from a photo of the Lagazuoi mountain, taken from the Wikimedia Commons repository \cite{Cinque-Torri-2026}.}
    \label{fig:cauchy_noise_figure}
\end{figure}

\subsection{Further remarks on PnP-IPA} \label{sec:IPA_remarks}

In this final subsection, we provide some further remarks on the observed convergence properties of our proposed algorithm.

First, both the number of inner steps and the number of backtracking reductions $m_k$ performed by the line search are low. More precisely, in Gaussian noise problems the inner solver performs 2 iterations on average, and always no more than 4, while for Cauchy noise problems only one inner iteration is sufficient to meet the stopping criterion. As for the backtracking loop, the number of backtracking reduction ranges from 0,1 for Gaussian low noise problems to 2 for Gaussian high and Cauchy noise. 
In particular, the latter information is important since a huge number of inner steps would have represented a bottleneck for the method, since every step requires an evaluation of the denoiser. This behaviour of the algorithm can be explained by the strong convexity of the inner objective function. 

Secondly, our objective is to motivate the choice of the step size $\alpha_k$, as detailed in Section \ref{sec:implementation_details}. To begin with, we can rewrite the forward step of our method as
\begin{equation}\nonumber
    z^k = x^k - \alpha_k \nabla f_0(x^k) = (1 + \alpha_k \lambda) x^k - \alpha_k \nabla f_{data} (x^k).
\end{equation}
Hence, the inexact proximal operator of $\hsigma / (\alpha_k \lambda)$ is computed at the point
\begin{equation}\nonumber
    \frac{z^k}{\alpha_k \lambda} = \left( 1 + \inv{\alpha_k \lambda} \right) x^k - \inv{\lambda} \nabla f_{data} (x^k).
\end{equation}
When $\alpha_k$ is large, the term $\inv{\alpha_k \lambda}$ is small. Therefore, the proximal operator $\mathrm{prox}_{\hsigma/(\alpha_k \lambda)}(\frac{z^k}{\alpha_k \lambda})$ approaches the identity mapping, as the function $\hsigma/(\alpha_k \lambda)$ tends to zero. Consequently, we obtain $\tilde{p}^k \approx \frac{z^k}{\alpha_k \lambda}$, where we can also neglect the term $\inv{\alpha_k \lambda} x^k$.

As a result, the update $\tilde{y}^k = D_\sigma(\tilde{p}^k)$ of our method essentially reduces to a pure PnP proximal gradient step with step size $\tau = 1 / \lambda$:
\begin{equation} \label{eq:PnP_approximation}
    \tilde{y}^k \approx D_\sigma \left( x^k - \inv{\lambda} \nabla f_{data}(x^k) \right).
\end{equation}
Naturally, the overall algorithm is not equivalent to Prox-PnP, also due to the presence of the line-search procedure. Moreover, our method theoretically converges for any choice of $\lambda$. We observe that for a small $\lambda$, a larger $\alpha_k$ is needed to reduce the term $\frac 1{\alpha_k\lambda}$ and obtain the approximation in \eqref{eq:PnP_approximation}. Thus, even if the choice of a large initial value $\alpha_0 = 10^6$ might appear unusual, in these specific settings the parameter $\alpha_k$ acts less like a traditional step size and more as a parameter regulating how closely the inexact proximal step approximates a pure PnP step of the form \eqref{eq:PnP_approximation}. The order of magnitude of $\alpha_0$ in the numerical experiments was set by manual adjustment. We then let $\alpha_k$ decrease to a fixed value, which we found beneficial to stabilize the method in its final iterations.

\section{Conclusions} \label{sec:conclusions}

In this paper, we introduced PnP-IPA (Plug-and-Play Inexact Proximal Algorithm), a novel and provably convergent optimization scheme for solving imaging inverse problems using a deep neural network-based regularizer. While the Plug-and-Play framework has empirically dominated the field of computational imaging, previous approaches providing rigorous theoretical guarantees have suffered from severe limitations, such as strict upper bounds on the regularization parameter, fixed step--sizes, and the requirement of convex data-fidelity terms.

To overcome these bottlenecks, we built upon the analytical properties of the Gradient-Step (GS) denoiser and designed a novel splitting strategy which allows for the inexact proximal computation of the scaled regularizer, hence enabling the use of variable step--sizes. Furthermore, to enable an adaptive step--size selection without exact evaluations of the objective function, we introduced a novel surrogate merit function that successfully guides an Armijo-like backtracking line--search. 

From a theoretical standpoint, we embedded our algorithm within the abstract KL framework, proving global convergence of the generated sequence to a stationary point of the nonconvex objective. Crucially, this result is achieved without any assumption on the regularization parameter $\lambda$ and fully encompasses nonconvex data-fidelity terms.

Numerical experiments on image deblurring tasks confirmed the practical superiority of PnP-IPA. By lifting the theoretical constraints on $\lambda$, our method allows for optimal parameter tuning, yielding state-of-the-art restoration quality and competitive computational times under both aggressive and weak Gaussian noise. Moreover, the robustness and flexibility of PnP-IPA were strongly highlighted in the nonconvex setting of Cauchy noise, where our algorithm significantly outperformed existing provable PnP methods that struggle with the extremely large Lipschitz constants of the data-fidelity gradient.

Future research directions will focus on proving that a single inner step is sufficient to satisfy the inexact proximal point condition, which would lead to a much more efficient algorithm. Moreover, we will explore the extension of our method to other denoisers beyond the Gradient-Step, such as Flow-Matching denoisers, thereby integrating a renoising procedure into our framework.

\ack{All the authors are members of the Gruppo Nazionale per il Calcolo Scientifico (GNCS) of the Italian Istituto Nazionale di Alta Matematica (INdAM), which is
kindly acknowledged.}



\data{The CBSD68 dataset used in the numerical experiments with Gaussian noise can be downloaded e.g. from https://huggingface.co/datasets/deepinv/CBSD68. The image is a patch from a photo of the Lagazuoi mountain, taken from the Wikimedia Commons repository \cite{Cinque-Torri-2026}.}


\appendix

\section{Proofs for the convergence analysis} \label{sec:proofs}

\begin{proof}[Proof of Lemma \ref{lem:squaredNorms}]
    We first recall that $H_k$ is $\theta$--strongly convex, $\hat y^k$ is its unique minimizer. Therefore, we have 
    \begin{equation*}
        \frac{\theta}{2} \norm{\hat{y}^k - \tilde y^k}^2 \leq H_k(\tilde y^k)- H_k(\hat{y}^k) \leq  - \frac{\mu}{2} \tkd{\Delta} 
    \end{equation*}
    which directly gives \eqref{eq:1epsilon}. Regarding \eqref{eq:2epsilon}, since $H_k(x^k) = 0$ we also have
    \begin{equation*}
        \frac{\theta}{2} \norm{\hat{y}^k - x^k}^2 \leq - H_k(\hat{y}^k) \leq - H_k(\tk{y}) - \frac{\mu}{2} \tkd{\Delta} \leq - \left ( 1 + \frac{\mu}{2} \right ) \tkd{\Delta},
    \end{equation*}
    where the second inequality is obtained by \eqref{eq:basicHYtilde} and the third by \eqref{eq:deltatildeineq}. Finally, regarding \eqref{eq:3epsilon}, by triangle inequality we compute
     \begin{align*}
        \norm{\tilde{y}^k - x^k} ^2 &\leq 2 \norm{\tilde{y}^k - \hat{y}^k}^2 + 2\norm{\hat{y}^k - x^k}^2 \leq - 2\frac\mu\theta \tkd{\Delta} - \frac{4}{\theta} \left ( 1 + \frac{\mu}{2} \right ) \tkd{\Delta},
    \end{align*}
where the second inequality is given by \eqref{eq:1epsilon}--\eqref{eq:2epsilon} and is equivalent to \eqref{eq:3epsilon}.
\end{proof}

\begin{proof}[Proof of Lemma \ref{lem: inequalities}]
By applying the Descent Lemma to $f_0$, we have
\begin{equation} \label{eq:f0lem2}
    f_0(\hat{y}^k) \geq f_0(\tilde{y}^k) - \inner{\nabla f_0(\hat{y}^k)}{\tilde{y}^k - \hat{y}^k} - \frac{L_{f_0}}{2} \norm{\tilde{y}^k - \hat{y}^k}^2.
\end{equation}

Inequality \eqref{eq:basicHYtilde} yields
\begin{equation}\nonumber
\begin{aligned}
f_1(\tilde y^k) + &\langle\nabla f_0(x^k),\tilde y^k-x^k\rangle + \frac{1}{2\alpha_k}\|\tilde y^k-x^k\|^2\\
& \leq f_1(\hat y^k) + \langle\nabla f_0(x^k),\hat y^k-x^k\rangle + \frac{1}{2\alpha_k}\|\hat y^k-x^k\|^2 -\frac{\mu}2\tilde \Delta_k
\end{aligned}
\end{equation}
which, rearranging the terms and neglecting the non--negative ones at the right-hand-side results in
\begin{equation} \label{eq:f1lem2new}
    \begin{aligned}
        f_1 (\hat{y}^k) &\geq f_1(\tk{y}) +   \langle \nabla f_0(x^k),\tilde y^k-\hat y^k\rangle -\frac{1}{2\alpha_k}\|\hat y^k-x^k\|^2 +\frac \mu 2 \tilde \Delta_k.
    \end{aligned}
\end{equation}
Summing \eqref{eq:f0lem2} with \eqref{eq:f1lem2new} gives
\begin{equation}\label{eq:A5}
F(\hat y^k)  \geq  F(\tilde y^k) + \langle \nabla f_0(x^k) -\nabla f_0(\hat y^k), \tilde y^k-\hat y^k\rangle - \frac{1}{2\alpha_k}\|\hat y^k-x^k\|^2 -\frac{L_{f_0}} 2 \|\tilde y^k-\hat y^k\|^2+\frac \mu 2 \tilde \Delta_k.
\end{equation}
Now we derive bounds for the second term in the previous inequality using the Cauchy-Schwarz inequality:
\begin{align*} 
     \inner{\nabla f_0 (x^k) - \nabla f_0(\hat{y}^k)}{\tk{y} - \hat{y}^k} & \leq \norm{\nabla f_0 (x^k) - \nabla f_0(\hat{y}^k)} \norm{\tk{y} - \hat{y}^k} \\
     & \leq L_{f_0} \norm{\hat{y}^k - x^k} \norm{\tk{y} - \hat{y}^k} \\
     & \leq L_{f_0} \sqrt{- \frac{2}{\theta} \left ( 1 + \frac{\mu}{2} \right ) \tkd{\Delta}} \cdot \sqrt{-\frac\mu\theta \tkd{\Delta}} \\
     & = \frac{L_{f_0}}{\theta} \sqrt{2\mu \left ( 1 + \frac{\mu}{2} \right )} (- \tkd{\Delta}),
\end{align*}
where the last inequality follows from Lemma \ref{lem:squaredNorms}. 
By exploiting the last inequality and \eqref{eq:1epsilon}--\eqref{eq:2epsilon}, equation \eqref{eq:A5} becomes
\begin{equation*}
    \begin{aligned}
        F(\hat{y}^k) &\geq F(\tilde{y}^k) - \frac{L_{f_0}}{\theta} \sqrt{2\mu \left ( 1 + \frac{\mu}{2} \right )} (- \tkd{\Delta}) - \frac{1}{\theta\alpha_k}\left(1+\frac \mu 2\right) (- \tkd{\Delta}) - \frac{L_{f_0}\mu}{2\theta} (- \tkd{\Delta}) + \frac{\mu}{2} \tkd{\Delta} \\
        &\geq F(\tilde{y}^k) + \left(\frac{L_{f_0}}{\theta} \sqrt{2\mu \left ( 1 + \frac{\mu}{2} \right )} + \frac{1}{\theta\alpha_{\min}} \left(1+\frac \mu 2\right) + \frac{L_{f_0}\mu}{2\theta} + \frac{\mu}{2} \right)\tkd{\Delta},
    \end{aligned}
\end{equation*}
which gives us \eqref{eq:firstLem2} with
\begin{equation*}
    c = \frac{L_{f_0}}\theta \sqrt{{2}\mu \left ( 1 + \frac{\mu}{2} \right )} + \frac{1}{\theta\alpha_{\min}} \left(1+\frac \mu 2\right) + \frac{L_{f_0}\mu}{2\theta} + \frac{\mu}{2}.
\end{equation*}
As for \eqref{eq:secondLem2}, using again the Descent Lemma we obtain
\[
    f_0(\hat{y}^k) \leq f_0(x^k) + \inner{\nabla f_0(x^k)}{\hat{y}^k - x^k} + \frac{L_{f_0}}{2} \norm{\hat{y}^k - x^k}^2.
\]
Summing $f_1(\hat{y}^k)$ on both sides yields
\begin{align*}
    F(\hat{y}^k) &\leq f_1(\hat{y}^k) + f_0(x^k) + \inner{\nabla f_0(x^k)}{\hat{y}^k - x^k} + \frac{L_{f_0}}{2} \norm{\hat{y}^k - x^k}^2 \\
    &= F(x^k) + f_1(\hat{y}^k) - f_1(x^k) + \inner{\nabla f_0(x^k)}{\hat{y}^k - x^k} + \frac{L_{f_0}}{2} \norm{\hat{y}^k - x^k}^2 \\
    &\leq F(x^k) + \underbrace{\inner{\nabla f_0(x^k)}{\hat{y}^k - x^k} + \inv{2 \alpha_k} \norm{\hat{y}^k - x^k}^2 + f_1(\hat{y}^k) - f_1(x^k)}_{= \ H_k(\hat{y}^k) \ \leq \ H_k(x^k) \ = \ 0} + \frac{L_{f_0}}{2} \norm{\hat{y}^k - x^k}^2 \\
    &\leq F(x^k) + \frac{L_{f_0}}{2} \norm{\hat{y}^k - x^k}^2 \\
    &\leq F(x^k) - \frac{L_{f_0}}{\theta} \left ( 1 + \frac{\mu}{2} \right ) \tkd{\Delta},
\end{align*}
where last inequality holds since \eqref{eq:2epsilon}. Inequality \eqref{eq:secondLem2} is then satisfied with $d = \frac{L_{f_0}}{\theta} \left ( 1 + \frac{\mu}{2} \right )$.
\end{proof}

\begin{proof}[Proof of Lemma \ref{lem:subgrad_bound}]
Applying Lemma \ref{lem:subcalculus_basic} to the function $H_k$ and recalling that $\hat y^k$ is its minimizer, in view of Definition \ref{def:subdiff} one obtains
\begin{equation}
0 \in \partial H_k(\hat{y}^k) =  \ \inv{\alpha_k} (\hat{y}^k - x^k ) + \nabla f_0(x^k) + \partial f_1(\hat{y}^k) \Longleftrightarrow - \inv{\alpha_k} ( \hat{y}^k - x^k ) - \nabla f_0(x^k) \in \partial f_1(\hat{y}^k), 
\end{equation}
i.e., $\hat{w}^k\in \partial f_1(\hat{y}^k)$. Since $\hat{v}^k = \hat{w}^k + \nabla f_0(\hat{y}^k)$, we have that $\hat{v}^k \in \nabla f_0(\hat{y}^k) + \partial f_1(\hat{y}^k) = \partial F(\hat{y}^k)$; moreover, using the triangular inequality, the Lipschitz continuity of $\nabla f_0$ and inequality \eqref{eq:2epsilon}, it holds that
    \begin{equation}
        \begin{aligned}
        \norm{\hat{v}^k} &\leq \inv{\alpha_k} \norm{\hat{y}^k - x^k} + \norm{- \nabla f_0(x^k) + \nabla f_0(\hat{y}^k)}
        \leq \inv{\alpha_k} \norm{\hat{y}^k - x^k} + L_{f_0} \norm{\hat{y}^k - x^k}\\ &\leq \left ( \inv{\alpha_k} + L_{f_0} \right ) \sqrt{- \frac{2}{\theta} \left ( 1 + \frac{\mu}{2} \right ) \tkd{\Delta}}
        \leq \left ( \inv{\alpha_{\min}} + L_{f_0} \right ) \sqrt{\frac{2}{\theta} \left ( 1 + \frac{\mu}{2} \right ) } \sqrt { - \tkd{\Delta}},
        \end{aligned}
    \end{equation}
which corresponds to \eqref{eq:subgrad_ineq} with $q = \left ( \inv{\alpha_{\min}} + L_{f_0} \right ) \sqrt{ \frac{2}{\theta} \left ( 1 + \frac{\mu}{2} \right ) }$.
\end{proof}

\begin{proof}[Proof of Proposition \ref{prop:H1}]
Inequality \eqref{eq:Phi_desc} follows from the line--search and from Lemma \ref{lemma:linesearch}. The inequality proves that the sequence $\Phi_k = \Phi(x^{k}, U^k)$ is decreasing. Moreover, since from Assumption [A4] $f_0$ is bounded from below, $\Phi$ is bounded from below as well. Therefore, summing up the inequality terms, \eqref{eq:Phi_desc} implies $-\sum_{k=0}^\infty \tilde{\Delta}_k < \infty$, which, in turn, yields $\lim_{k \to \infty} \tilde{\Delta}_k  = 0$. Recalling \eqref{eq:3epsilon}, this implies $\lim_{k \to \infty} \norm{\tilde{y}^{k} - x^{k}} = 0$. To conclude, the iteration update in the algorithm entails that 
\begin{equation*}
    \norm{x^{k+1} - x^{k}} \leq \norm{x^k + \eta_k d^k - x^k} = \eta_k \norm{d^k} \leq \norm{\tilde{y}^k - x^k},
\end{equation*}
which implies $\lim_{k \to \infty} \norm{x^{k+1} - x^{k}} = 0$.
\end{proof}

\begin{proof}[Proof of Proposition \ref{prop:H2}]
From inequality \eqref{eq:line_search_cons}, we obtain
\begin{equation*}
    \Phi(x^{k+1}, U^{k+1}) \leq  F(\tilde{y}^{k})\leq F(\hat{y}^{k}) - c \tkd{\Delta},
\end{equation*}
where the last inequality follows from \eqref{eq:firstLem2}. Setting $\rho_k$ as in \eqref{eq:rho_k}, 
we obtain the left-most inequality in \eqref{eq:H2_ineq}. On the other hand, inequality \eqref{eq:secondLem2} entails
\begin{equation*}
  F(\hat{y}^{k}) + \frac{1}{2} \rho_k^2 \leq F(x^{k}) - d \tkd{\Delta} + \frac{1}{2} \rho_k^2 = f_0(x^k) + f_1(x^{k}) - d \tkd{\Delta} + \frac{1}{2} \rho_k^2 \leq f_0(x^k) + U^k - d \tkd{\Delta} + \frac{1}{2} \rho_k^2.
\end{equation*}
Setting $r_k$ as in \eqref{eq:r_k} gives the rightmost inequality in \eqref{eq:H2_ineq}. Finally, from \eqref{eq:limits} we have that $\lim_{k \to \infty} r_k = \lim_{k \to \infty} \rho_k = 0$.
\end{proof}

\section{Hyperparameters of the baseline algorithms} \label{sec:baselines_hyperparameters}

For the sake of reproducibility, this appendix collects in
Table~\ref{tab:baselines_hyperparameters} the hyperparameter setting used for
the three Hurault-based baselines compared against PnP-IPA in
Section~\ref{sec:numerical_experiments}. The
values mostly follow the guidelines of the original references
\cite{Hurault-etal-2022a, Hurault-etal-2024}, except for the weak Gaussian noise for which a different combination of hyperparameters has been found to produce better results. We remind that Relaxed-Prox-PnP is
omitted from the high-noise regime since its optimal $\gamma_{\text{relax}}$
in that setting is $1$, making the algorithm coincide with Prox-PnP.

\begin{table}[h!]
\centering
\caption{Hyperparameters of the baseline algorithms across the three
experimental regimes. $\nu$ denotes the standard deviation of the Gaussian
noise, $\gamma_{\text{relax}}$ denotes the relaxation parameter; a dash
indicates a parameter that is not used by the corresponding method.}
\label{tab:baselines_hyperparameters}
\begin{tabular}{lcccc}
\toprule
Method / Regime & $\lambda$ & $\alpha$ & $\gamma_{\text{relax}}$ & $\sigma_{\text{den}}$ \\
\midrule
\multicolumn{5}{l}{\textit{Prox-PnP}} \\
\quad high Gaussian ($\nu = 0.05$) & $1.00$               & --- & ---   & $0.50\,\nu$    \\
\quad weak Gaussian ($\nu = 0.01$) & $1.49$               & --- & ---   & $0.75\,\nu$    \\
\midrule
\multicolumn{5}{l}{\textit{Relaxed-Prox-PnP}} \\
\quad weak Gaussian ($\nu = 0.01$) & $1.625$              & ---     & $0.6$ & $1.25\,\nu$ \\
\midrule
\multicolumn{5}{l}{\textit{alpha-Prox-PnP}} \\
\quad high Gaussian ($\nu = 0.05$) & $1.00$               & $0.50$  & $1.0$ & $0.50\,\nu$ \\
\quad weak Gaussian ($\nu = 0.01$) & $2.66$               & $0.37$  & $0.6$ & $1.25\,\nu$ \\
\bottomrule
\end{tabular}
\end{table}

\bibliographystyle{plain}
\bibliography{references}

\end{document}